
\documentclass[a4paper]{amsart}

\usepackage{amsmath}
\usepackage{amssymb}
\usepackage{amsthm}
\usepackage{amsfonts}
\usepackage{amstext}
\usepackage{amsopn}
\usepackage{amsxtra}
\usepackage{mathrsfs}
\usepackage{color}      
\usepackage{verbatim}   

\usepackage{pdfsync}

\newtheorem{lemma}{Lemma}[section]
\newtheorem{theorem}{Theorem}
\newtheorem{proposition}[lemma]{Proposition}

\theoremstyle{definition}
\newtheorem{definition}[lemma]{Definition}
\theoremstyle{definition}
\newtheorem{remark}[lemma]{Remark}
\theoremstyle{definition}

{\catcode `\@=11 \global\let\AddToReset=\@addtoreset}
\AddToReset{equation}{section}

\definecolor{purple}{rgb}{0.65, 0, 1}
\definecolor{orange}{rgb}{1,.5,0}
\definecolor{brown}{rgb}{.9,.73,.26}
\definecolor{dgreen}{rgb}{0,0.8,0.2}

\newcommand{\E}{\mathrm{e}}
\newcommand{\I}{\mathrm{i}}
\newcommand{\D}{\mathrm{d}}
\newcommand{\h}{\mathcal{H}}
\newcommand{\C}{\mathbb{C}}
\newcommand{\R}{\mathbb{R}}
\newcommand{\N}{\mathbb{N}}
\newcommand{\diver}{\mathrm{div}}
\newcommand{\e}{{\varepsilon }}

\newcommand{\supp}{{\mathrm{supp}}}

\def\tr{\mathop{\rm tr}\nolimits} 




\begin{document}

\title[Stationary states and large time behavior of WFP] {The
 Wigner-Fokker-Planck equation: Stationary states and large time
 behavior}

\thanks{A.~Arnold acknowledges partial support from the FWF (project ``Quantum Transport Equations:
Kinetic, Relativistic, and Diffusive Phenomena'' and Wissenschafts\-kolleg ``Differentialgleichungen''), the \"OAD (Amadeus project), and the Newton Institute of Cambridge University.
 I.~M. Gamba is supported by NSF-DMS 0807712.  M. P. Gualdani is
 supported by NSF-DMS-1109682. C. Mouhot would like to thank
 Cambridge University who provided repeated hospitality in 2009
 thanks to the Award No. KUK-I1-007-43, funded by the King Abdullah
 University of Science and Technology (KAUST). C. Sparber has been
 supported by the Royal Society through his Royal Society University
 Research Fellowship. Support from the Institute of Computational
Engineering and Sciences at the University of Texas at Austin is also
gratefully acknowledged}

\author[A. Arnold]{Anton Arnold} \address{Institute for Analysis and
 Scientific Computing, Technical University Vienna, Wiedner
 Hauptstr. 8, A-1040 Vienna, AUSTRIA}
\email{anton.arnold@tuwien.ac.at}

\author[I. Gamba] {Irene M. Gamba} \address{Department of
 Mathematics, The University of Texas at Austin, 1 University Station
 C1200, Texas 78712, USA} \email{gamba@math.utexas.edu}

\author[M. P. Gualdani]{Maria Pia Gualdani} \address{Department of
 Mathematics, The University of Texas at Austin, 1 University Station
 C1200, Texas 78712, USA} \email{gualdani@math.utexas.edu}

\author[S. Mischler]{St\'ephane Mischler}
\address{CEREMADE, Universit\'e Paris-Dauphine, Place du Marechal de
 Lattre de Tassigny F-75775 Paris Cedex 16, FRANCE}
\email{mischler@ceremade.dauphine.fr}

\author[C. Mouhot]{Cl\'ement Mouhot} \address{DPMMS, Centre for
 Mathematical Sciences, Cambridge University, Wilberforce Road,
 Cambridge CB3 0WA, UK \\
 {\it On leave from}: DMA, \'ENS, 45 rue d'Ulm F-75230 Paris cedex
 05, FRANCE} \email{Clement.Mouhot@ens.fr}

\author[C.~Sparber]{Christof Sparber}
\address{Department of Mathematics, Statistics, and Computer Science, M/C 249, University of Illinois at Chicago, 851 S. Morgan Street, Chicago, IL 60607, USA}
\email{sparber@math.uic.edu}

\subjclass[2000]{82C10, 35S10,74H40, 81Q15} \keywords{Wigner
 transform, Fokker Planck operator, spectral gap, stationary
 solution, large time behavior}

\begin{abstract}
 We consider the linear Wigner-Fokker-Planck equation subject to confining
 potentials which are smooth perturbations of the harmonic oscillator
 potential. For a certain class of perturbations we prove that
 the equation admits a unique stationary solution in a weighted
 Sobolev space. A key ingredient of the proof is a new result on the
 existence of spectral gaps for Fokker-Planck type operators in certain weighted $L^2$--spaces.  In
 addition we show that the steady state corresponds to a positive
 density matrix operator with unit trace and that the solutions of
 the time-dependent problem converge towards the steady state with an
 exponential rate. \end{abstract}

\maketitle

\section{Introduction}\label{sint}

This work is devoted to the study of the \emph{Wigner-Fokker-Planck
 equation} (WFP), considered in the following dimensionless form
(where all physical constants are normalized to one for simplicity):
\begin{equation}\label{qfp}
\left \{
\begin{aligned}
 \partial_t w + \xi \cdot\nabla _xw +\Theta [V] w =
 & \,  \Delta _\xi w +2  \, \textrm{div}_\xi \, (\xi w ) +  \Delta _x w ,\\
 w \big |_{t=0} = & \ w _0(x,\xi),
\end{aligned}
\right.
\end{equation}
where $x, \xi \in \R^d$, for $d \geq 1$, and $t\in \R_+$.
Here, $w(t,x,\xi)$ is the (real valued) \emph{Wigner transform} \cite{Wi} of a quantum mechanical
\emph{density matrix} $\rho (t,x, y)$, as defined by
\begin{equation}\label{trans}
w (t, x,\xi ):=\frac{1}{(2\pi )^d}\int_{\mathbb R^d}\rho \left(t, x+\frac{\eta }{2}\ , \ x-\frac{\eta }{2} \right) \E^{-\I \xi \cdot \eta} \, \D \eta.
\end{equation}
Recall that, for any time $t \in \R_+$, a quantum mechanical
(mixed) state is given by a positive, self-adjoint \emph{trace class operator}
$\rho(t) \in \mathscr T^+_1$.
Here we denote by $\mathscr B(L^2(\R^d))$ the set of bounded operators on $L^2(\R^d)$ and
by
$$
\mathscr T_1:=\{ \rho \in \mathscr B(L^2(\R^d)): \tr |\rho| < \infty \} \, ,
$$
the corresponding set of trace-class operators. We consequently write $\rho \in \mathscr T^+_1 \subset \mathscr T_1$, if in addition $\rho \geq 0$ (in the sense of non-negative operators).
Since $\mathscr T_1 \subset \mathscr T_2$, the space of \emph{Hilbert-Schmidt operators}, i.e.
$$
\mathscr T_2:=\{ \rho \in \mathscr B(L^2(\R^d)): \tr (\rho^* \rho) < \infty \} \, ,
$$
we can identify the operator $\rho(t)$ with its corresponding integral
kernel $\rho(t,\cdot, \cdot)\in L^2 (\R^{2d})$, the so-called
\emph{density matrix}.  Consequently, $\rho(t)$ acts on any given
function $\varphi \in L^2(\R^d)$ {\it via}
\begin{equation*}
(\rho(t)\varphi)(x)=\int_{\mathbb R^d} \rho (t, x,y ) \, \varphi(y) \, \D y.
\end{equation*}
Using the Wigner transformation \eqref{trans}, which by definition
yields a \emph{real-valued} function $w(t, \cdot, \cdot) \in
L^2(\R^{2d})$, one obtains a \emph{phase-space description} of quantum
mechanics, reminiscent of classical statistical mechanics, with $x\in
\R^d$ being the position and $\xi\in \R^d$ the momentum.  However, in
contrast to classical phase space distributions, $w(t, x, \xi)$ in
general also takes \emph{negative values}.

Equation \eqref{qfp} governs the time evolution of $w(t,x,\xi)$ in the
framework of so-called \emph{open quantum systems}, which model both
the Hamiltonian evolution of a quantum system and its interaction with
an environment (see \cite{Da}, e.g.). Here, we specifically describe
these interactions by the Fokker-Planck (FP) type diffusion operator
on the r.h.s. of \eqref{qfp}.  For notational simplicity we use here
only normalized constants in the quantum FP operator.  However, all of
the subsequent analysis also applies to the general WFP model
presented in \cite{SCDM} (cf. Remark \ref{rem1} below).  Potential
forces acting on $w(t,\cdot,\cdot)$ are taken into account by the
pseudo-differential operator
\begin{align}\label{theta}
 (\Theta [V] f)(x,\xi) := -\frac{\I}{(2\pi )^d }\iint_{\mathbb
   R^{2d}} \delta V (x,\eta) \, f (x,\xi')\ \E^{\I \eta \cdot (\xi
   -\xi')} \, \D \xi' \, \D \eta ,
\end{align}
where the symbol $\delta V$ is given by
\begin{align}\label{delta}
 \delta V (x,\eta)= V\left({ x+\frac{\eta}{2}}\right)-V\left({
     x-\frac{\eta }{2}}\right),
\end{align}
and $V$ is a given real valued function.
The WFP equation is a kinetic model for quantum mechanical
charge-transport, including diffusive effects, as needed, e.g., in the
description of quantum Brownian motion \cite{Di}, quantum optics
\cite{EL}, and semiconductor device simulations \cite{DNJ}. It can be
considered as a quantum mechanical generalization of the usual kinetic
Fokker-Planck equation (or Kramer's equation), to which it is known to
converge in the classical limit $\hbar \to 0$, after an appropriate
rescaling of the appearing physical parameters \cite{Bo}. The WFP
equation has been partly derived in \cite{CEFM} as a rigorous scaling
limit for a system of particles interacting with a heat bath of
phonons.  Additional ``derivations'' (based on formal arguments from
physics) can also can be found in \cite{De, Di, Va, VH}.

In recent years, mathematical studies of WFP type equations mainly
focused on the \emph{Cauchy problem} (with or without self-consistent
Poisson-coupling), see \cite{ACD, ADM1, ADM2, ALMS, ArSp, CLN}.  In
these works, the task of establishing a rigorous definition for the
particle density $n(t,x)$ has led to various functional analytical
settings.  To this end, it is important to note that the dynamics
induced by \eqref{qfp} maps $\mathscr T_1^+(L^2(\R^d))$ into itself,
since the so-called \emph{Lindblad condition} is fulfilled (see again
Remark \ref{rem1} below).  For more details on this we refer to
\cite{ALMS, ArSp} and the references given therein.  In the present
work we shall be mainly interested in the asymptotic behavior as $t
\to + \infty$ of solutions to \eqref{qfp}.  To this end, we first need
to study the stationary problem corresponding to \eqref{qfp}.  Let us
remark, that stationary equations for open quantum systems, based on the
Wigner formalism, seem to be rather difficult to treat as only very
few results exist (in spite of significant efforts, cf. \cite{ALZ}
where the stationary, inflow-problem for the linear Wigner equation in
$d=1$ was analyzed).  In fact the only result for the WFP equation is
given in \cite{SCDM}, where the existence of a unique steady state for
a quadratic potential $V(x)\propto |x|^2$ has been proved. However,
the cited work is based on several explicit calculations, which can
\emph{not} be applied in the case of a more general potential $V(x)$.

The goal of the present paper is \emph{twofold}: First, we aim to establish the
existence of a normalized steady state $w_\infty(x,\xi)$ for \eqref{qfp} in the case of confining potentials $V(x)$, which are given by a suitable class of perturbations of
quadratic potentials (thus, $V(x)$ can be considered as a perturbed harmonic oscillator potential).
The second goal is to study the long-time behavior of \eqref{qfp}. We shall prove exponential
convergence of the time-dependent solution $w(t,x,\xi)$ towards $w_\infty$ as $t\to +\infty$. In a subsequent step, we shall also prove that the
stationary Wigner function $w_\infty$ corresponds to a density
matrix operator $\rho_\infty \in \mathscr T_1^+$.  Remarkably, this proof
exploits the positivity preservation of the \emph{time-dependent}
problem (using results from \cite{ArSp}), via a stability
property of the steady states.

To establish the existence of a (unique) steady state $w_\infty$, the basic idea is to prove the existence of a spectral gap for the unperturbed Wigner-Fokker-Planck operator with
quadratic potential. This implies invertibility of the (unperturbed) WFP-operator on the orthogonal of its kernel.
Assuming that the perturbation potential is sufficiently small with respect to this spectral gap, we can set up a fixed point iteration to obtain the existence of $w_\infty$.
The key difficulty in doing so, is the choice of a suitable functional setting:
On the one hand a Gaussian weighted $L^2$--space seems to be a natural candidate, since it ensures dissipativity
of the unperturbed WFP-operator (see Section \ref{spre}). Indeed, this space is classical in the study of the long-time behavior of the classical (kinetic) Fokker-Planck equation, see \cite{HeNi}.
However, it  \emph{does not} allow for feasible
perturbations through $\Theta[V_0]$. In fact, even for smooth and compactly supported perturbation potentials $V_0$,
the operator $\Theta[V_0]$ would be \emph{unbounded} in such an $L^2$--space (due to the non-locality of $\Theta[V_0]$, see Remark \ref{unbounded}).
We therefore have to enlarge the functional space
and to show that the unperturbed WFP-operator then still has a (now smaller) spectral gap.
This is a key step in our approach. It is a result from spectral and semigroup theory
(cf. Proposition~\ref{gap_big_space}) which is related to a more general mathematical
theory of spectral gap estimates for kinetic equations, developed in parallel in \cite{GMM} (see also \cite{Mo}). 
We also remark that for $V(x)=|x|^2$ the WFP equation corresponds to a differential operator with  
quadratic symbol \cite{SCDM} and thus our approach is 
closely related to recent results for hypo-elliptic and sub-elliptic operators given in \cite{EH, HeNi,PS}.

Comparing our methods to closely related results in the quantum mechanical
literature, we first cite \cite{FR}, where several criteria for the existence of stationary density
matrices for quantum dynamical semigroups (in Lindblad form) were obtained by means of compactness methods. In \cite{AFN} the
applicability of this general approach to the WFP equation was established. In
\cite{FV, FR2} sufficient conditions (based on commutator relations for the
Lindblad operators) for the large-time convergence of open quantum
systems were derived.  However, these techniques do not provide a
rate of convergence towards the steady states. In comparison to that, the
novelty of the present work consists in establishing steady states in a kinetic framework
and in proving exponential convergence rates. However,
the optimality of such rates for the WFP equation remains an open problem. In this context one should also mention the recent work \cite{HeSj}, in
which explicit estimates on the norm of a semigroup in terms of bounds on the resolvent of its generator are obtained, very much along the same lines as in present paper and in \cite{GMM}.\\

The paper is organized as
follows: In Section~\ref{sset} we present the basic mathematical setting (in particular the
class of potentials covered in our approach) and state our two main theorems. In Section~\ref{spre} we collect some known results for the case of a purely quadratic potential and we introduce the
Gaussian weighted $L^2$--space for this unperturbed WFP
operator.
This basic setting is then generalized in Section~\ref{sec_new}, which contains the core of our (enlarged) functional framework:
We shall prove new spectral gap estimates for the WFP operator with
a harmonic potential in $L^2$--spaces with only \emph{polynomial
weights}. In Section~\ref{sbound} we prove the boundedness of the operator $\Theta[V_0]$ in these spaces.
Finally, Section~\ref{sproof} concludes the proof
of our main result by combining the previously established elements.
Appendix \ref{S7.1} includes the rather technical proof of a preliminary step which guarantees the applicability of the spectral method developed in \cite{GMM}.


\section{Setting of the problem and main results}\label{sset}

\subsection{Basic definitions} In this work we shall use the following convention for the Fourier
transform of a function $\varphi(x)$:
$$
\widehat \varphi (k) := \int_{\R^d} \varphi(x) \, \E^{- \I k \cdot x}
\D x.
$$
{}From now on we shall assume that the (real valued, time-independent) potential $V$, appearing in
\eqref{qfp}, is of the form
\begin{align}\label{Potential}
V(x)= \frac{1}{2}\, |x|^2 + \lambda{V}_0(x),
\end{align}
with $V_0\in C^\infty(\R^d;\R)$ and $\lambda \in\R$ some given \emph{perturbation parameter}. In
other words we consider a smooth perturbation $V_0$ of the harmonic
oscillator potential. The precise assumption on $V_0$ is listed in (\ref{condV}). An easy calculation shows that for such a $V$ the
stationary equation, corresponding to \eqref{qfp}, can be written as
\begin{align}\label{qfpS1}
L w  = \lambda\Theta[V_0]w,
\end{align}
where $L$ is the linear operator
\begin{align}\label{L}
 L w := -\xi \cdot\nabla _x w + x \cdot \nabla_\xi w + \Delta _\xi w
 +2 \, \textrm{div}_\xi (\xi w ) + \Delta _x w .
\end{align}
\begin{remark} When considering the
 slightly more general class of potentials
\begin{align*}
 V(x)= \frac{1}{2}\, |x|^2 + \alpha \cdot x + \lambda{V}_0(x),\quad
 \lambda\in \R,\,\alpha \in \R^d,
\end{align*}
we would find, instead of \eqref{L}, the following operator: $L_\alpha
w:= Lw + \alpha \cdot\nabla_\xi w$. Thus, by the change of variables
$x \mapsto  x + \alpha$ we are back to \eqref{L}.
\end{remark}
The basic idea for establishing the existence of (stationary) solutions
to (\ref{qfpS1}) is the use of a fixed point iteration. However, $L$
has a non-trivial kernel.  Indeed it has been proved in
\cite{SCDM} that, in the case $\lambda = 0$, there exists a unique
stationary solution $\mu\in\mathcal S(\mathbb R^{2d})$, satisfying
\begin{align}\label{steadyeq}
L \mu = 0
\end{align}
and the normalization condition
\begin{align}\label{mass}
\iint_{\R^{2d}} \mu (x,\xi) \, \D  x \, \D \xi = 1.
\end{align}
Explicitly, $\mu$ can be written as
\begin{equation}\label{mu}
\mu  = c\, \E^{- A(x,\xi)} ,
\end{equation}
where the function $A$ is given by
\begin{equation}
\label{A}
A(x,\xi) :=   \frac{1}{4}\, \left(|x|^2+2 x\cdot \xi +3 |\xi |^2
\right),
\end{equation}
and the constant $c>0$ is chosen such that
\eqref{mass} holds. Note that for any $\rho \in
\mathscr T_1$ such that $w\in L^1(\R^{2d})$ the following formal
identity
$$
\tr \rho = \int_{\R^d} \rho (x,x) \, \D x = \iint_{\R^{2d}} w(x,\xi)
\, \D x \, \D \xi,
$$
can be rigorously justified by a limiting procedure in $\mathscr T_1$,
see \cite{Ar}. Since $\tr \rho$ is proportional to the total mass of
the quantum system, we can interpret condition \eqref{mass} as a mass
normalization.

In the following, we shall denote by $\sigma >0$ the biggest constant
such that
\begin{equation}\label{conv_A} {\rm Hess} A-\sigma \,
 \emph{\textrm{\bf I}} \ge 0,\quad \textrm{for all}\;
 (x,\xi)\in\R^{2d},
\end{equation}
in the sense of positive definite matrices, where $\emph{\textrm{\bf
   I}}$ denotes the identity matrix on $\R^{2d}$.  In the analysis of
the classical FP equation, condition \eqref{conv_A} is referred to as
the \emph{Bakry-Emery criterion} \cite{AMTU}.  In our case one easily
computes $$\sigma = 1-1/\sqrt{2}.$$
In a Gaussian weighted $L^2$--space, $\sigma$ will be the spectral gap of the unperturbed WFP-operator and hence the decay rate towards the corresponding steady $\mu$ (cf. \eqref{spectrum-L},  \eqref{semigr1} below).\\

The functional setting of our problem will be based on the following
weighted Hilbert spaces. While the stationary and transient Wigner function are real valued,
we need to consider function spaces over $\C$, for the upcoming spectral analysis.

\begin{definition}\label{hmDef} For any $m \in \N$, we define ${\h}_m: =
 L^2(\R^{2d}, \nu^{-1}_m \D x \,\D \xi)$, where the weight is
$$\nu^{-1}_m := 1 + A^m(x,\xi).$$
We equip $\h_m$ with the inner
 product
\begin{align*}
{\langle f,g\rangle} _{\h_m } = \iint_{\R^{2d}}  \frac{f \bar g}{\nu_m} \, \D x \, \D \xi.
\end{align*}
Clearly, we have that $\h_{m+1} \subset \h_m$, for all $m\in \N$.
\end{definition}

\subsection{Main results}

With these definitions at hand, we can now state the main theorems of our work.
Note that for the sake of transparency we did not try to optimize the appearing constants.

\begin{theorem}\label{th1}
 Let $m\ge Kd$ be some fixed integer, where $K=K(A)\in (1,144]$ is a constant depending
 only on $A(x,\xi)$ (defined in Lemma \ref{techlemma}). Assume that the perturbation potential
 $V_0$ satisfies
\begin{align}\label{condV}
\Gamma_m :=    C_m \max_{ |j|\le  m } \| \partial_{x}^j V_0 \|_{L^\infty(\R^d)}<+\infty,
\end{align}
where $C_m >0$ depends only on $m$ and $d$, as seen in the proof of Proposition \ref{lempert}.
Next we fix some $\widetilde \gamma_m \in (0, \gamma_m)$, where $\gamma_m>0$ is given in \eqref{gamma-m}.
Furthermore, let the perturbation parameter $\lambda$ satisfy
\begin{align}\label{cond-lambda}
|\lambda| < \frac{\widetilde\gamma_m}{ \Gamma_m \delta_m}  \,,
\end{align}
where 
$\delta_m=\delta_m(\widetilde\gamma_m)> 1 $ is defined in \eqref{C}. Then it holds:
\begin{itemize}
\item[(i)] The stationary Wigner-Fokker-Planck equation \eqref{qfpS1} admits a unique weak solution $w_ \infty \in \h_m \, \cap \,  H^1(\R^{2d})$, satisfying $\iint_{\R^{2d}} w_\infty \, \D x \,\D \xi= 1$.
Moreover, $w_\infty$ is real valued and satisfies $w_\infty \in H^2_{\rm loc}(\R^{2d})$.
\item[(ii)] Equation \eqref{qfp} admits a unique mild solution $w\in C([0,\infty), \h_m)$. In addition, for any such mild solution $w(t)$ with initial data $w_0 \in \h_m$ satsifying $\iint_{\R^{2d}} w_0 \, \D x \,\D \xi= 1$, we have
$$
{\| \, w(t) - w_\infty \, \|}_{\h_m} \leq  \delta_m\E^{- \kappa_m t} \, {\| \, w_0 - w_\infty \, \|}_{\h_m}\, ,\qquad \forall \, t\ge0,
$$
with an exponential decay rate
$$
\kappa_m :=    \widetilde \gamma_m  -|\lambda| \delta_m\Gamma_m>0.
$$
\item[(iii)] Concerning the continuity of
$w_\infty=w_\infty(\lambda)$ w.r.t. $\lambda$, we have
$$
{\| \, w_\infty - \mu \, \|}_{\h_m} \leq \frac{|\lambda| \delta_m\Gamma_m}{\widetilde\gamma_m  -|\lambda| \delta_m\Gamma_m }\|\mu\|_{\h_m}.
$$
\end{itemize}
\end{theorem}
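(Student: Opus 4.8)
The plan is to combine a spectral gap estimate for the unperturbed operator $L$ in the polynomial-weight space $\h_m$ (Proposition~\ref{gap_big_space}) with a fixed point argument that treats $\lambda\Theta[V_0]$ as a bounded perturbation (using the boundedness result to be established in Section~\ref{sbound}, i.e.\ Proposition~\ref{lempert}, which supplies the constant $\Gamma_m$ in \eqref{condV}). Concretely, I would first record that $L$ generates a $C_0$-semigroup $(\E^{tL})_{t\ge0}$ on $\h_m$ whose spectrum, away from the one-dimensional kernel $\mathrm{span}\{\mu\}$, lies in a half-plane $\{\mathrm{Re}\,z\le-\widetilde\gamma_m\}$; restricting to the closed hyperplane $\mathcal{X}_m:=\{f\in\h_m:\iint f\,\D x\,\D\xi=0\}$, the restriction $L|_{\mathcal X_m}$ is invertible with a resolvent bound giving $\|\E^{tL}f\|_{\h_m}\le\delta_m\E^{-\widetilde\gamma_m t}\|f\|_{\h_m}$ for $f\in\mathcal X_m$. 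The mass-conservation property of $L$ (integrate \eqref{L} against $1$: the transport and divergence terms and the Laplacians all integrate to zero) guarantees that $\mathcal X_m$ is invariant under the flow, and since $\Theta[V_0]$ also preserves $\mathcal X_m$ (its integral over $(x,\xi)$ vanishes, because $\int \Theta[V_0]f\,\D\xi$ has the structure of a $\xi$-derivative in the kinetic interpretation), the whole affine problem stays on the slice of mass $1$.

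For part (i), I would decompose $w_\infty=\mu+g$ with $g\in\mathcal X_m\cap H^1$, so that \eqref{qfpS1} becomes $Lg=\lambda\Theta[V_0](\mu+g)$, i.e.\ $g=\lambda(L|_{\mathcal X_m})^{-1}\Theta[V_0](\mu+g)=:\mathcal T(g)$. Using the resolvent bound one gets $\|(L|_{\mathcal X_m})^{-1}\|\le\delta_m/\widetilde\gamma_m$ (more precisely the constant $\delta_m=\delta_m(\widetilde\gamma_m)$ from \eqref{C}); combined with $\|\Theta[V_0]\|_{\h_m\to\h_m}\le\Gamma_m$ from Proposition~\ref{lempert}, the map $\mathcal T$ is a contraction on $\mathcal X_m$ precisely when $|\lambda|\delta_m\Gamma_m/\widetilde\gamma_m<1$, which is exactly the smallness condition \eqref{cond-lambda}. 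Banach's fixed point theorem then yields a unique $g$, hence a unique $w_\infty$. The $H^1$-regularity comes from the ellipticity of the diffusion part $\Delta_x+\Delta_\xi$ (plus hypoellipticity from the transport terms), bootstrapped against the right-hand side which, being $\Theta[V_0](\mu+g)\in\h_m$, lies in $L^2_{\mathrm{loc}}$, giving $w_\infty\in H^2_{\mathrm{loc}}$; real-valuedness follows because $L$, $\Theta[V_0]$ and $\mu$ are real, so $\overline{w_\infty}$ solves the same fixed point equation and uniqueness forces $\overline{w_\infty}=w_\infty$.

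For part (ii), I would write the time-dependent problem \eqref{qfp} as $\partial_t w=Lw+\lambda\Theta[V_0]w$; subtracting the stationary equation and setting $u(t):=w(t)-w_\infty\in\mathcal X_m$, one obtains the closed equation $\partial_t u=(L+\lambda\Theta[V_0])u$ on $\mathcal X_m$. Since $L+\lambda\Theta[V_0]$ is a bounded perturbation of $L$ and $\|\lambda\Theta[V_0]\|\le|\lambda|\Gamma_m$, the bounded perturbation theorem for semigroups (applied to the restriction to $\mathcal X_m$, where $L$ has decay rate $\widetilde\gamma_m$ with constant $\delta_m$) gives $\|\E^{t(L+\lambda\Theta[V_0])}\|_{\mathcal X_m\to\mathcal X_m}\le\delta_m\E^{(-\widetilde\gamma_m+|\lambda|\delta_m\Gamma_m)t}=\delta_m\E^{-\kappa_m t}$, and $\kappa_m>0$ again by \eqref{cond-lambda}. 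Applying this to $u(0)=w_0-w_\infty$ yields the stated estimate; existence and uniqueness of the mild solution in $C([0,\infty),\h_m)$ is the standard variation-of-constants/Duhamel construction. Part (iii) is then immediate from the fixed point identity: $w_\infty-\mu=g=\mathcal T(g)$, so $\|g\|_{\h_m}\le(|\lambda|\delta_m\Gamma_m/\widetilde\gamma_m)(\|\mu\|_{\h_m}+\|g\|_{\h_m})$, and solving for $\|g\|_{\h_m}$ gives the claimed bound.

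The main obstacle is not in this bookkeeping but in the ingredient it rests on: establishing the spectral gap of $L$ in $\h_m$ with only polynomial weights, i.e.\ Proposition~\ref{gap_big_space}. In the Gaussian-weighted space the gap $\sigma$ is classical (Bakry--Émery / hypocoercivity), but enlarging to $\h_m$ forces one to run the enlargement/factorization argument of \cite{GMM}: one must verify that $L$ in $\h_m$ decomposes as a dissipative part plus a part that is "regularizing" relative to the Gaussian space, check the hypotheses of the abstract spectral-mapping / semigroup-decay theorem there (this is what Appendix~\ref{S7.1} is for), and track how the gap degrades from $\sigma$ to the smaller $\gamma_m$ and how the constant $\delta_m>1$ blows up as $\widetilde\gamma_m\uparrow\gamma_m$. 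A secondary subtlety is the unboundedness of $\Theta[V_0]$ in the Gaussian space (Remark~\ref{unbounded}), which is exactly why the polynomial-weight space is needed and why the loss of gap is an unavoidable price; keeping the perturbation bound $\Gamma_m$ finite requires $m$ derivatives of $V_0$ to be controlled, hence the hypothesis \eqref{condV} and the constraint $m\ge Kd$.
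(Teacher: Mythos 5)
Your proposal is correct and follows essentially the same route as the paper: a Banach fixed-point argument on the zero-mass hyperplane $\h_m^\perp$ using $\|L^{-1}\|_{\mathscr B(\h_m^\perp)}\le \delta_m/\widetilde\gamma_m$ and $\|\Theta[V_0]\|\le\Gamma_m$ for (i), a Duhamel/bounded-perturbation estimate (which is exactly the Gronwall computation the paper performs) for (ii), and the same bound for (iii) -- the paper obtains it via a Neumann series for $(L-\lambda\Theta[V_0])^{-1}$, which is equivalent to solving your fixed-point inequality. The constants, the role of Proposition~\ref{gap_big_space}, and the smallness condition \eqref{cond-lambda} all match.
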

\begin{remark} In this result, the constant $\sigma_m:=\widetilde \gamma_m/\delta_m>0$, roughly speaking, plays the same role for $L$ on $\mathcal H_m$ as $\sigma>0$ does in the case of $\mathcal H$ (where $\mathcal H$ is defined in Definition \ref{definit:h}), where it is nothing but the size of the
spectral gap, see Proposition \ref{prop}. For $L$ on $\mathcal H$, $\sigma $ also gives the exponential decay rate in the unperturbed case $\lambda = 0$. For $L$ on $\mathcal H_m$ the situation is more complicated. Here,
assertion (ii) yields an exponential decay of the unperturbed semi-group with rate $\kappa_m = \widetilde \gamma_m \in (0, \gamma_m)$ and $\gamma_m \not = \sigma_m$ (but possibly equal to $\sigma$, as can be seen from \eqref{C}).
In addition, one should note that $\delta_m>1$ may blow-up as $\widetilde \gamma_m\nearrow\gamma_m$, cf. estimate \eqref{estim_line}.
\end{remark}
Theorem \ref{th1} is formulated in the Wigner picture of
quantum mechanics. We shall now turn our attention to the
corresponding density matrix operators $\rho(t)$. This is important
since it is {\it a priori} not clear that $w_\infty$ is physically
meaningful -- in the sense of being the Wigner transform of a positive trace class
operator. To this end we denote by $\rho_\infty$ the Hilbert-Schmidt
operator corresponding to the kernel $\rho_\infty(x,y)$, which is
obtained from $w_\infty (x,\xi)$ by the \emph{inverse Wigner
 transform}, i.e.
\begin{equation*}
\label{inw}
\rho_\infty (x,y)= \int_{\mathbb R^d} w_\infty\left(\frac{x+y}{2}, \xi
\right) \, \E^{ -\I\xi \cdot (x-y)} \, \D \xi.
\end{equation*}
Analogously we denote by $\rho_0$ the Hilbert-Schmidt operator corresponding to the initial Wigner function $w_0 \in \mathcal H_m$.

We remark that the existence of a unique mild solution of equation \eqref{qfp} on $\h_m$ will be a byproduct of our analysis.

\begin{theorem} \label{th2} Let $m\ge Kd$ be some fixed integer.
Let $V_0$, $\lambda$, and $w_0$ satisfy the same assumptions as in Theorem \ref{th1}.
Then we have:
\begin{itemize}
\item[(i)] The steady state $\rho_\infty$ is a positive trace-class operator on $L^2(\R^d)$, satisfying $\tr \rho_\infty = 1$.
\item[(ii)]
Let $\rho \in C([0,\infty), \mathscr T_2)$ be
the unique density matrix trajectory corresponding to the mild solution of \eqref{qfp}. Then, the steady state $\rho_\infty$ 
is exponentially stable, in the sense that
$$
{\| \, \rho(t) - \rho_\infty \, \|}_{\mathscr T_2} \leq  (2\pi)^{\frac{d}{2}}\delta_m \E^{- \kappa_m t} {\| \, w_0 - w_\infty \, \|}_{\h_m} \ , \qquad \forall \, t\ge0.
$$
\item[(iii)]
If the initial state $w_0 \in \mathcal H_m$ corresponds to a density matrix $\rho_0 \in \mathscr T_1^+$ (and hence $w_0$ is real valued,
$\tr \rho_0 \equiv \iint w_0 \, \D x \D \xi = 1$), then we also have
$$
\lim_{t \to \infty} { \| \, \rho(t)  - \rho_\infty \|}_{\mathscr T_1} = 0.
$$
\end{itemize}
\end{theorem}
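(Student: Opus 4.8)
The strategy is to lift the conclusions of Theorem~\ref{th1} from the Wigner side to the density-matrix side by combining three facts: (a) the inverse Wigner transform is, up to the factor $(2\pi)^{d/2}$, an isometry of $L^2(\R^{2d})$ onto $\mathscr T_2$; (b) the WFP flow preserves $\mathscr T_1^+$ and the trace (see \cite{ALMS, ArSp}); (c) on the cone of non-negative operators, strong convergence together with convergence of the traces forces convergence in $\mathscr T_1$ (a Gr\"umm-type continuity of Schatten norms).

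\emph{Proof of (ii).} In the convention \eqref{trans}, Plancherel's identity in the $\eta$-variable followed by the unimodular change of variables $(x,\eta)\mapsto(x+\tfrac\eta2,x-\tfrac\eta2)$ yields $\|\rho\|_{\mathscr T_2}=(2\pi)^{d/2}\,\|w\|_{L^2(\R^{2d})}$ whenever $\rho\in\mathscr T_2$ has Wigner transform $w$. Since $\nu_m^{-1}=1+A^m\ge1$, we have $\|f\|_{L^2(\R^{2d})}\le\|f\|_{\h_m}$, so applying the isometry to $w(t)-w_\infty$ and invoking Theorem~\ref{th1}(ii) gives
$$
\|\rho(t)-\rho_\infty\|_{\mathscr T_2}=(2\pi)^{d/2}\|w(t)-w_\infty\|_{L^2(\R^{2d})}\le(2\pi)^{d/2}\delta_m\,\E^{-\kappa_m t}\|w_0-w_\infty\|_{\h_m}.
$$

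\emph{Proof of (i).} First fix one convenient reference state: let $\rho_0$ be the rank-one orthogonal projection onto the normalized Gaussian $\psi_0(x)=\pi^{-d/4}\E^{-|x|^2/2}$. Then $\rho_0\in\mathscr T_1^+$ with $\tr\rho_0=1$, and a direct computation shows that its Wigner transform is $w_0(x,\xi)=\mathrm{const}\cdot\E^{-|x|^2-|\xi|^2}$, which decays faster than any power of $A$, hence lies in $\h_m$ and satisfies $\iint_{\R^{2d}} w_0\,\D x\,\D\xi=1$. By \cite{ALMS, ArSp} the associated density-matrix trajectory satisfies $\rho(t)\in\mathscr T_1^+$ and $\tr\rho(t)=1$ for all $t\ge0$. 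By part (ii), $\rho(t)\to\rho_\infty$ in $\mathscr T_2$, hence in operator norm (which is dominated by the $\mathscr T_2$-norm); since the cone of non-negative operators is closed in operator norm, $\rho_\infty\ge0$. Testing against an orthonormal basis $(e_k)$ of $L^2(\R^d)$ and using Fatou's lemma,
$$
\tr\rho_\infty=\sum_k\langle e_k,\rho_\infty e_k\rangle=\sum_k\lim_{t\to\infty}\langle e_k,\rho(t)e_k\rangle\le\liminf_{t\to\infty}\tr\rho(t)=1,
$$
so $\rho_\infty\in\mathscr T_1^+$. To upgrade this to $\tr\rho_\infty=1$, note that for $m\ge Kd$ (recall $K>1$) the weight $1+A^m$ grows like $(|x|^2+|\xi|^2)^m$ with $2m>2d$, whence $\h_m\hookrightarrow L^1(\R^{2d})$ by Cauchy--Schwarz; in particular $w_\infty\in L^1(\R^{2d})$, and the trace identity $\tr\rho_\infty=\iint_{\R^{2d}}w_\infty\,\D x\,\D\xi$ (justified by the limiting procedure of \cite{Ar}) together with Theorem~\ref{th1}(i) gives $\tr\rho_\infty=1$.

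\emph{Proof of (iii).} Now $w_0\in\h_m$ corresponds to some $\rho_0\in\mathscr T_1^+$ with $\tr\rho_0=1$. As in (i), $\rho(t)\in\mathscr T_1^+$ with $\tr\rho(t)=1$ for all $t$, and by (ii) $\rho(t)\to\rho_\infty$ strongly (in fact in operator norm). Since all the operators are non-negative, $\|\rho(t)\|_{\mathscr T_1}=\tr\rho(t)=1=\tr\rho_\infty=\|\rho_\infty\|_{\mathscr T_1}$; the classical continuity theorem for Schatten norms (strong convergence plus convergence of the $\mathscr T_1$-norms implies $\mathscr T_1$-convergence; see, e.g., \cite{ArSp} and the references therein) then yields $\|\rho(t)-\rho_\infty\|_{\mathscr T_1}\to0$.

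\emph{The main obstacle} is part (i): one must identify the mild solution produced in $\h_m$ by Theorem~\ref{th1}(ii) with the trace-class dynamics of \cite{ArSp} for data $w_0$ arising from a state in $\mathscr T_1^+$ (a uniqueness matter, since $\mathscr T_1\subset\mathscr T_2$ and the $\mathscr T_2$-trajectory is unique), and then rule out escape of mass, which is precisely what turns $\tr\rho_\infty\le1$ into $\tr\rho_\infty=1$. Both points rely on the $L^1$-embedding of $\h_m$ — hence on the quantitative requirement $m\ge Kd$ — and on the positivity/trace preservation borrowed from \cite{ArSp}; once these are in hand, (ii) and (iii) are soft consequences of Theorem~\ref{th1} and standard trace-ideal theory.
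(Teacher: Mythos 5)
Your proposal is correct and follows the same overall architecture as the paper: part (ii) via the $(2\pi)^{d/2}$-isometry between $L^2(\R^{2d})$ and $\mathscr T_2$ together with $\nu_m^{-1}\ge1$; part (i) by running the time-dependent problem from a trace-class initial state, borrowing positivity and trace preservation from \cite{ArSp}, and passing to the limit; part (iii) via Gr\"umm's theorem (which is the ``classical continuity theorem'' you invoke; the paper cites \cite{Si}, Th.~2.19). The only genuine divergences are local, inside part (i). Where the paper extracts a weak-$\star$ limit point in $\mathscr T_1$ by Banach--Alaoglu (using the predual of compact operators) and identifies it with $\rho_\infty$ through the $\mathscr T_2$-convergence, you instead use operator-norm convergence to get $\rho_\infty\ge0$ and then Fatou's lemma on the diagonal matrix elements to conclude $\tr\rho_\infty\le1$; this is a slightly more elementary and self-contained route to $\rho_\infty\in\mathscr T_1^+$. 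Your choice of the explicit Gaussian ground-state projector as reference initial datum is also a clean concretization of the paper's ``choose any $\rho_0\in\mathscr T_1^+$ with corresponding $w_0\in\h_m$'', since it makes the membership $w_0\in\h_m$ verifiable rather than assumed. Finally, for $\tr\rho_\infty=1$ you use the $L^1(\R^{2d})$-embedding of $\h_m$ (valid since $m\ge Kd>d$) together with the trace identity $\tr\rho=\iint w$ stated in Section~2, whereas the paper argues through $\hat w_\infty\in C(\R^d_\eta,L^1(\R^d_x))$ and $\tr\rho=\int\vartheta(x,0)\,\D x$; both rest on the same limiting procedure from \cite{Ar}, and both use the quantitative lower bound on $m$ in an essential way. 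You correctly flag the identification of the $\h_m$-mild solution with the $\mathscr T_1^+$-dynamics of \cite{ArSp} as the point requiring a uniqueness argument in $\mathscr T_2$; the paper glosses over this in the same way.
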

Note that, in the presented framework, we do not obtain exponential
convergence towards the steady state in the ${\mathscr T_1}$-norm but
only in the sense of Hilbert-Schmidt operators. This is due to the weak compactness
methods involved in the proof of Gr\"umm's theorem (cf.\ the proof of Th.\ \ref{th2} in \S\ref{sproof}).
\begin{remark}\label{rem1}
Consider now the following, more general quantum
 Fokker-Planck type operator replacing the r.h.s.\ of \eqref{qfp}:
$$
Qw:= D_{\rm pp} \Delta_\xi w+2\, D_{\rm pq} \,  \text{div}_x \left(\nabla_\xi
 w\right) + 2 D_{\rm f} \, \text{div}_\xi\left(\xi w\right)
+ D_{\rm qq} \, \Delta_x w.
$$
It is straightforward to extend our results to this case
as long as the \emph{Lindblad condition} holds, i.e.
\begin{equation}\label{lindblad}
D_{\rm pp} \ge0,\qquad D_{\rm pp} D_{\rm qq}- \left(D_{\rm pq}^2+\frac{D_{\rm f}^2}{4}\right)\ge0.
\end{equation}
The modified quadratic function $A(x,\xi)$ is given in \cite{SCDM}. The Lindblad condition \eqref{lindblad} implies that discarding in \eqref{qfp} the diffusion in $x$, and
hence reducing the r.h.s.\ to the classical Fokker-Planck operator
$Q_{\rm cl}w:=\Delta_\xi w+2 \, \textrm{div}_\xi (\xi w)$, would \emph{not}
describe a ``correct'' open quantum system.  Nevertheless, this is a frequently used
model in applications \cite{ZP}, yielding reasonable results in numerical simulations.
\end{remark}


\section{Basic properties of the unperturbed operator $L$}\label{spre}

\subsection{Functional framework} It has been shown in \cite{SCDM} that the operator $L$, defined in \eqref{L}, can be rewritten
in the following form
\begin{align}\label{L1}
Lw= \text{div}  \left( \nabla w +w (\nabla A + F)\right),
\end{align}
with
\begin{equation}\label{identity}
 \diver(F\E^{-A}) = \frac1c\diver(F\mu)=0.
\end{equation}
Here and in the sequel, all differential operators act with respect to both
$x$ and $\xi$ (if not indicated otherwise).
In \eqref{L1}, the function $A$ is
defined by \eqref{A} and
\begin{align}
\label{fdef}
F :=
\begin{pmatrix}
-\xi \\
x+2  \xi
\end{pmatrix}
-\nabla A = \frac{1}{2}
\begin{pmatrix}
-x-3\xi \\
x+ \xi
\end{pmatrix}
.
\end{align}
The reason to do so is that \eqref{L1} belongs to a class
of non-symmetric Fokker-Planck operators considered in
\cite{AMTU}. From this point of view, a natural functional space to study the unperturbed operator $L$ is given by the following definition.

\begin{definition} \label{definit:h} Let $\h: = L^2(\R^{2d},\mu^{-1}\; \D x \,\D \xi)$, equipped with the
inner product
\begin{align*}
{\langle f,g\rangle} _{\h} = \iint_{\R^{2d}}  \frac{f \bar g}{\mu}\, \D x \, \D \xi.
\end{align*}
\end{definition}

We can now decompose $L$ into its symmetric
and anti-symmetric part in $\h$, i.e.
\begin{equation}\label{decomp}
L = L^{\rm s} + L^{\rm as},
\end{equation}
where
\begin{equation}\label{Ls}
L^{\rm s} w := \text{div} \left(\nabla w + w \nabla A \right),
\quad L^{\rm as} w := \text{div} (F w).
\end{equation}
It has been shown in \cite{SCDM}, that the following
property holds:
\begin{align}\label{kernel}
L^{\rm s} \mu =0,  \quad L^{\rm as} \mu =0.
\end{align}
where $\mu$ is the stationary state defined in (\ref{mu}). Next we shall properly define the operator $L$.
To this end we first consider ${L \big |_{C_0^\infty}}$, which is closable (w.r.t.\ the $\h$-norm)
since it is dissipative:

\begin{lemma} \label{disslemma} $L\big |_{C_0^\infty}$ is dissipative,
 i.e.\ it satisfies $\emph{Re} \,{\langle L w , w \rangle} _{\h} \leq 0$, for all
 $w \in C_0^\infty(\R^{2d})$.
\end{lemma}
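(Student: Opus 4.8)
The plan is to verify dissipativity by an explicit integration-by-parts computation, exploiting the divergence form \eqref{L1} together with the identity \eqref{identity}. Write $L = L^{\rm s} + L^{\rm as}$ as in \eqref{decomp}--\eqref{Ls}. Since $\mathrm{Re}\,\langle L^{\rm as} w, w\rangle_\h$ will turn out to vanish, the whole dissipativity reduces to the sign of $\mathrm{Re}\,\langle L^{\rm s} w, w\rangle_\h$, which is the genuinely dissipative (symmetric, Dirichlet-form) part. First I would treat the symmetric part: for $w \in C_0^\infty(\R^{2d})$, using $L^{\rm s} w = \mathrm{div}(\nabla w + w\nabla A)$ and $\mu = c\,\E^{-A}$, write $\nabla w + w \nabla A = \mu^{-1}c^{-1}\cdot\ldots$, more precisely note that $\nabla w + w\nabla A = \E^{A}\,\mathrm{div}$-free rewriting: $\nabla w + w\nabla A = \E^{A}\nabla(\E^{-A} w)\cdot(-1)$... let me instead use the cleaner form $L^{\rm s} w = \mu\,\mathrm{div}\!\big(\mu^{-1}\nabla w - \mu^{-2}(\nabla\mu) w\big)$ is not quite it either; the standard manipulation is
\begin{align*}
\langle L^{\rm s} w, w\rangle_\h = \iint \frac{\bar w}{\mu}\,\mathrm{div}(\nabla w + w\nabla A)\,\D x\,\D\xi = -\iint \nabla\!\Big(\frac{\bar w}{\mu}\Big)\cdot(\nabla w + w\nabla A)\,\D x\,\D\xi,
\end{align*}
and since $\nabla(\bar w/\mu) = \mu^{-1}\nabla\bar w + \mu^{-1}\bar w\,\nabla A$ (because $\nabla\mu^{-1} = \mu^{-1}\nabla A$), the integrand becomes $\mu^{-1}|\nabla w + w\nabla A|^2$ after pairing (taking real parts of the cross terms), giving $\mathrm{Re}\,\langle L^{\rm s} w, w\rangle_\h = -\iint \mu^{-1}|\nabla w + w\nabla A|^2 \le 0$. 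The boundary terms vanish because $w$ has compact support.

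Next I would handle the anti-symmetric part $L^{\rm as} w = \mathrm{div}(Fw)$. Compute
\begin{align*}
\langle L^{\rm as} w, w\rangle_\h = \iint \frac{\bar w}{\mu}\,\mathrm{div}(Fw)\,\D x\,\D\xi = -\iint \nabla\!\Big(\frac{\bar w}{\mu}\Big)\cdot F\, w\,\D x\,\D\xi,
\end{align*}
again with no boundary contribution. Using $\nabla(\bar w/\mu) = \mu^{-1}(\nabla\bar w + \bar w\,\nabla A)$, split into $-\iint \mu^{-1}(F\cdot\nabla\bar w)\,w - \iint \mu^{-1}|w|^2\,F\cdot\nabla A$. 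For the second term, observe $\mu^{-1}F\cdot\nabla A = -\mu^{-1}F\cdot\nabla\mu^{-1}\cdot\mu = $... more directly: $\mathrm{div}(F\mu) = 0$ from \eqref{identity} expands as $\mu\,\mathrm{div} F + F\cdot\nabla\mu = \mu\,\mathrm{div} F - \mu\,F\cdot\nabla A = 0$, so $\mathrm{div} F = F\cdot\nabla A$; in fact one checks directly from \eqref{fdef} that $\mathrm{div} F = 0$ and $F\cdot\nabla A = 0$ (the vector field $F$ is divergence-free and orthogonal to $\nabla A$ pointwise — this is the content of \eqref{identity}). Hence the second term vanishes. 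For the first term, $-\iint \mu^{-1} w\, (F\cdot\nabla\bar w) = -\tfrac12\iint \mu^{-1} F\cdot\nabla(|w|^2) + \tfrac12\iint \mu^{-1}(w\,F\cdot\nabla\bar w - \bar w\,F\cdot\nabla w)$; integrating the first piece by parts gives $\tfrac12\iint \mathrm{div}(\mu^{-1}F)\,|w|^2 = \tfrac12\iint \mu^{-1}(\mathrm{div} F - F\cdot\nabla A)|w|^2 = 0$ again, while the remaining piece is purely imaginary, so it contributes nothing to the real part. Therefore $\mathrm{Re}\,\langle L^{\rm as} w, w\rangle_\h = 0$.

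Combining, $\mathrm{Re}\,\langle L w, w\rangle_\h = \mathrm{Re}\,\langle L^{\rm s} w, w\rangle_\h = -\iint \mu^{-1}|\nabla w + w\nabla A|^2\,\D x\,\D\xi \le 0$ for all $w \in C_0^\infty(\R^{2d})$, which is the claim. The main obstacle, such as it is, is purely bookkeeping: keeping careful track of the complex conjugates so that the cross terms that survive are exactly the ones forming the perfect square in $L^{\rm s}$, and confirming that the terms coming from $L^{\rm as}$ are either zero (by the structural identities $\mathrm{div} F = 0$, $F\cdot\nabla A = 0$ encoded in \eqref{identity}) or purely imaginary. There is no analytic subtlety since everything is done on $C_0^\infty$, so all integrations by parts are justified and all boundary terms vanish.
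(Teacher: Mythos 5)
Your proof is correct and follows essentially the same route as the paper: split $L=L^{\rm s}+L^{\rm as}$, show the symmetric part yields a nonpositive Dirichlet form (your $\mu^{-1}|\nabla w+w\nabla A|^2$ equals the paper's $\mu\,|\nabla(w/\mu)|^2$), and kill the real part of the antisymmetric contribution via the structural identities, which the paper packages as $\diver(F\mu)=0$ while you verify the sharper facts $\diver F=0$ and $F\cdot\nabla A=0$ directly from \eqref{fdef}. There are two harmless sign slips in the antisymmetric computation (the purely imaginary commutator piece, and $\diver(\mu^{-1}F)=\mu^{-1}(\diver F+F\cdot\nabla A)$ since $\nabla\mu^{-1}=+\mu^{-1}\nabla A$), but both terms vanish identically, so the conclusion stands.
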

\begin{proof}
Using $\nabla A = - \mu ^{-1} \nabla \mu$  we have, on the one hand
\begin{align*} {\langle L^{\rm s} w , w \rangle} _{\h} = & \,
 \iint_{\R^{2d}} \frac{\bar w}{\mu} \, \text{div}\left(\nabla w + w \nabla
   A \right) \D x\, \D \xi
 = \, \iint_{\R^{2d}} \frac{\bar w}{\mu} \, \text{div}\left(\mu \nabla \left(\frac{w}{\mu} \right) \right) \D x\, \D \xi \\
 = & \, - \iint_{\R^{2d}} \mu \left |\nabla \left(\frac{w}{\mu}
   \right) \right|^2 \D x\, \D \xi \leq 0.
\end{align*}
On the other hand, it follows from \eqref{identity} that
$$
w \, \text{div} F  = - \frac{w}{\mu}  \, F \cdot \nabla \mu,
$$
and thus
$$
\text{div} (F w ) = - \mu F  \cdot \left( \frac{w}{\mu^2} \nabla \mu - \frac{\nabla w}{\mu} \right) =
\mu F \cdot \nabla \left(\frac{ w}{\mu}\right).
$$
An easy calculation then shows
\begin{align*}
 \text{Re}\,{\langle L^{\rm as} w , w \rangle} _{\h} = & \,\text{Re}\,
 \iint_{\R^{2d}} \frac{\bar w}{\mu} \, \text{div} (F w) \, \D x\, \D \xi =
 \text{Re}\,\iint_{\R^{2d}} \frac{\bar w}{\mu} \, F \cdot \nabla \left( \frac{w}{\mu}
 \right) \mu \,  \D x\, \D \xi  \\
 = & \, - \frac{1}{2} \iint_{\R^{2d}} \left|\frac{w}{\mu} \right|^2
 \text{div} (F \mu) \, \D x\, \D \xi = 0,
\end{align*}
by \eqref{identity}.  To sum up we have shown that $\text{Re}\,{\langle L w , w
 \rangle} _{\h} \leq 0$ holds.
\end{proof}
The operator $L\equiv \overline {L \big |_{C_0^\infty}}$ is now
closed, densely defined on $\h$ and dissipative.  Moreover one easily
sees that $L^* = L^{\rm s} - L^{\rm as}$.  The main goal of this
section is to prove that $L$ admits a
spectral gap and is invertible on the orthogonal complement of its kernel. For the first property, we start showing that $L$ is the
generator of a $C_0$-semigroup of contractions on $\h$. For this we
recall the following result from \cite{ArSp}.

\begin{lemma}\label{quadlemma} Let the operator $P = p_2\, ( x ,\xi, \nabla_x, \nabla_\xi)$, where $p_2$ is a second order polynomial,
be defined on the domain $\mathscr D(P)=C_0^\infty(\R^{2d})$.
Then $P$ is closable and $\overline{P \big |_{C_0^\infty}}$ is the maximum extension of $P$ in $L^2(\R^{2d})$.
\end{lemma}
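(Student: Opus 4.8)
\emph{Plan and soft part.} Closability is soft. Integration by parts on $C_0^\infty(\R^{2d})$ produces the formal adjoint $P^\dagger$, again a differential operator with polynomial coefficients of degree $\le 2$, with $\langle Pu,v\rangle_{L^2}=\langle u,P^\dagger v\rangle_{L^2}$ for $u,v\in C_0^\infty(\R^{2d})$. Density of $C_0^\infty(\R^{2d})$ then gives $P^\dagger|_{C_0^\infty}\subseteq(P|_{C_0^\infty})^*$, so $(P|_{C_0^\infty})^*$ is densely defined, $P|_{C_0^\infty}$ is closable, and I write $\overline P:=\overline{P|_{C_0^\infty}}=(P|_{C_0^\infty})^{**}$. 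Let $P_{\max}$ denote the maximal operator, with domain $\mathscr D(P_{\max}):=\{u\in L^2(\R^{2d}):Pu\in L^2(\R^{2d})\text{ distributionally}\}$ and action $u\mapsto Pu$. Unravelling the distributional adjoint and using $(P^\dagger)^\dagger=P$ gives $(P^\dagger|_{C_0^\infty})^*=P_{\max}$, and taking adjoints in the inclusion above yields $\overline P\subseteq P_{\max}$ (in particular $P_{\max}$ is closed). The real content is the reverse inclusion $P_{\max}\subseteq\overline P$, i.e.\ that $C_0^\infty(\R^{2d})$ is a core for $P_{\max}$; I would prove it by truncating and then mollifying.

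\emph{Truncation.} Fix $u\in\mathscr D(P_{\max})$, choose $\chi\in C_0^\infty(\R^{2d})$ equal to $1$ near the origin, and set $\chi_R(z):=\chi(z/R)$, $u_R:=\chi_R u$. Since $P$ has order two, $P(\chi_R u)=\chi_R\,Pu+[P,\chi_R]u$ with $\chi_R\,Pu\to Pu$ in $L^2$, and $[P,\chi_R]$ is a \emph{first}-order operator whose coefficients are products of $\nabla\chi_R$ or $\Delta\chi_R$ with coefficients of $P$. On $\supp\nabla\chi_R\subseteq\{|z|\sim R\}$ we have $|\nabla\chi_R|\lesssim R^{-1}$, $|\Delta\chi_R|\lesssim R^{-2}$, whereas the subprincipal coefficients of $P$ grow at most linearly and the principal ones are constant; hence all coefficients of $[P,\chi_R]$ are $O(1)$ there, and the parts of $[P,\chi_R]u$ falling on $u$ itself are $o(1)\cdot\|u\|_{L^2(\{|z|\sim R\})}\to0$. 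The only delicate term, $(\nabla\chi_R)\cdot(D\nabla u)$ with $D\ge0$ the constant diffusion matrix of $P$, involves only derivatives of $u$ in the range of $D^{1/2}$. For the WFP operator $L$ the principal part is the full Laplacian $\Delta_x+\Delta_\xi$, so interior elliptic regularity gives $u\in H^2_{\rm loc}(\R^{2d})$ and this term is controlled like the rest; for a general degenerate $p_2$ one would instead need a G\aa rding-type estimate (testing $Pu$ against $\chi^2u$ and absorbing the cross term) together with a H\"ormander bracket condition to recover the directions missed by $D$. In any case $[P,\chi_R]u\to0$ in $L^2$, so $u_R\in\mathscr D(P_{\max})$ with $u_R\to u$ and $Pu_R\to Pu$ in $L^2$.

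\emph{Mollification.} For $R$ fixed and $\rho_\varepsilon$ a standard symmetric mollifier, put $u_{R,\varepsilon}:=u_R*\rho_\varepsilon\in C_0^\infty(\R^{2d})$; then $u_{R,\varepsilon}\to u_R$ in $L^2$, and the mollification commutator $Pu_{R,\varepsilon}-(Pu_R)*\rho_\varepsilon$ splits according to the order of the terms of $P$. The second-order part has constant coefficients and contributes nothing. For the (at most) linear first-order part, an exact first-order Taylor expansion of each coefficient followed by one integration by parts moves the factor $(z-z')=O(\varepsilon)$ onto the mollifier, leaving $u_R$ convolved with fixed $L^1$ kernels of vanishing mean, which tend to $0$ in $L^2$. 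For the (at most) quadratic multiplication part, the exact second-order Taylor expansion produces a quadratic remainder with a gain $\varepsilon^2$ and a linear piece of the form $\varepsilon\,(1+|z|)\,u_R$ convolved with a fixed $L^1$ kernel — finite since $u_R$ has compact support, hence $\to0$. Thus $Pu_{R,\varepsilon}\to Pu_R$ in $L^2$, and a diagonal choice $\varepsilon=\varepsilon(R)\downarrow0$ gives $u_{R,\varepsilon(R)}\in C_0^\infty(\R^{2d})$ with $u_{R,\varepsilon(R)}\to u$ and $Pu_{R,\varepsilon(R)}\to Pu$ in $L^2$. Hence $u\in\mathscr D(\overline P)$ and $\overline Pu=Pu$, which establishes $P_{\max}\subseteq\overline P$.

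\emph{Main obstacle.} I expect the delicate step to be the truncation: verifying that $\chi_R u$ stays in $\mathscr D(P_{\max})$, equivalently the local $L^2$-control of the derivatives of $u$ appearing in the cutoff commutator. For $L$ this is immediate from ellipticity of its principal part, but in general it is exactly the point where the structure of $P$ — non-negativity of the diffusion matrix (the Lindblad condition) plus a bracket condition — has to be used. The mollification commutator estimates, by contrast, are routine: they go through precisely because the coefficients of $P$ are polynomials of degree $\le 2$, constant in the principal part, so that the Friedrichs commutators pick up exactly the powers of $\varepsilon$ that offset the at-most-linear growth of the coefficients against the compactly supported $u_R$.
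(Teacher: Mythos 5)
The paper itself does not prove Lemma \ref{quadlemma}: it is quoted from \cite{ArSp}, with variants in \cite{ACD, ADM1}, and those proofs are likewise of truncation-plus-mollification type, so your overall strategy is the standard one. Your closability argument is correct, and your mollification step is sound: the Friedrichs commutators close precisely because the principal coefficients are constant, the drift is affine, and the quadratic potential acts on the compactly supported $u_R$.

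The genuine gap is in the truncation step, and it is not quite where you locate it. You assert that for an elliptic principal part the cross term $(\nabla\chi_R)\cdot(D\nabla u)$ is ``controlled like the rest'' because interior elliptic regularity gives $u\in H^2_{\rm loc}$. Qualitatively that gives $\nabla u\in L^2_{\rm loc}$, but what you actually need is the quantitative bound $\|\nabla u\|_{L^2(A_R)}=o(R)$ on the annulus $A_R:=\supp\nabla\chi_R\subset\{|z|\sim R\}$, and neither interior regularity nor the obvious energy estimate delivers it: on $A_R$ the first-order coefficients of $P$ are of size $R$ and the zeroth-order ones of size $R^2$, so testing $Pu=f$ against $\chi_R^2\,\bar u$ only yields $\|\nabla u\|^2_{L^2(B_{2R})}\le C\big(\|u\|\,\|f\|+R^2\|u\|^2\big)$, i.e.\ $\|\nabla u\|_{L^2(A_R)}=O(R)$ and hence $\|(\nabla\chi_R)\cdot\nabla u\|_{L^2}=O(1)$, not $o(1)$. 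To close the argument one needs a \emph{global weighted} gradient estimate such as $\int\langle z\rangle^{-2}|\nabla u|^2\le C\big(\|u\|^2+\|u\|\,\|Pu\|\big)$, obtained by testing against $\langle z\rangle^{-2}\bar u$ and using the definiteness of $D$ to absorb the cross terms coming from the affine drift and from $\nabla\langle z\rangle^{-2}$; then $\|\nabla u\|^2_{L^2(A_R)}\le CR^2\int_{A_R}\langle z\rangle^{-2}|\nabla u|^2=R^2\cdot o(1)$ as $R\to\infty$, and the commutator term does vanish. This is exactly the ``G\aa rding-type estimate, testing $Pu$ against $\chi^2 u$'' that you mention, but you defer it to the degenerate case, whereas it is already indispensable in the elliptic case because of the growth of the lower-order coefficients. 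Finally, as you yourself concede, your argument only covers operators whose second-order differential part is (semi-)definite; this suffices for the operator $H^*$ to which the paper applies the lemma (its principal part is the full Laplacian on $\R^{2d}$), but it does not prove the lemma for an arbitrary quadratic symbol $p_2$ as stated.
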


Several variants of such a result (on different functional spaces) can be found in \cite{ACD, ADM1}. We can use this result now in order to prove that
$L$ is the generator of a $C_0$-semigroup.

\begin{lemma} \label{lemsemi}
$L$ generates a $C_0$-semigroup of contractions on $\h$.
\end{lemma}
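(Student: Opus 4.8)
The plan is to invoke the Lumer--Phillips theorem, which characterizes generators of $C_0$-semigroups of contractions as the densely defined, closed, dissipative operators $L$ for which $\mathrm{range}(L - \lambda_0) = \h$ for some (equivalently, all) $\lambda_0 > 0$. By Lemma~\ref{disslemma} and the discussion following it, $L \equiv \overline{L|_{C_0^\infty}}$ is already closed, densely defined, and dissipative on $\h$; likewise $L^* = L^{\mathrm s} - L^{\mathrm{as}}$ is dissipative by the same computation (the sign of the antisymmetric part is irrelevant, and the symmetric part is unchanged). So the only missing ingredient is a range (or surjectivity) condition, and the cleanest route is to show that $\lambda_0 - L$ has dense range for some $\lambda_0 > 0$, which together with dissipativity of $L^*$ (hence injectivity of $\lambda_0 - L^*$, so that $(\mathrm{range}(\lambda_0 - L))^\perp = \ker(\lambda_0 - L^*) = \{0\}$) gives surjectivity.

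The key step is therefore to identify the adjoint precisely, i.e.\ to check that the formal adjoint $L^{\mathrm s} - L^{\mathrm{as}}$, computed on $C_0^\infty$, is actually the true Hilbert-space adjoint $L^*$ of the closed operator $L$ on $\h$. This is where Lemma~\ref{quadlemma} enters: after the unitary transformation $f \mapsto \mu^{-1/2} f$ which identifies $\h$ with $L^2(\R^{2d})$, the operator $L$ becomes a differential operator with second-order polynomial coefficients (its symbol is quadratic, as recalled in the excerpt for the quadratic-potential case), and Lemma~\ref{quadlemma} asserts that such an operator has a \emph{unique} closed realization, namely the closure of its restriction to $C_0^\infty$. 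Consequently both $\overline{L|_{C_0^\infty}}$ and $\overline{(L^{\mathrm s}-L^{\mathrm{as}})|_{C_0^\infty}}$ are the maximal (and only) closed extensions, and since the formal adjoint of a maximal operator is the closure of the formal adjoint, we get $L^* = \overline{(L^{\mathrm s}-L^{\mathrm{as}})|_{C_0^\infty}}$ with no ambiguity. In particular $L^*$ is dissipative.

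Given this, the argument closes quickly: pick $\lambda_0 > 0$. Since $L^*$ is dissipative, $\|(\lambda_0 - L^*) g\|_\h \ge \lambda_0 \|g\|_\h$ for all $g \in \mathscr D(L^*)$, so $\lambda_0 - L^*$ is injective. Hence $\overline{\mathrm{range}(\lambda_0 - L)} = (\ker(\lambda_0 - L^*))^\perp = \h$, i.e.\ $\lambda_0 - L$ has dense range. Combined with dissipativity of $L$ — which gives $\|(\lambda_0 - L) w\|_\h \ge \lambda_0 \|w\|_\h$, so that $\mathrm{range}(\lambda_0 - L)$ is closed (a dissipative estimate turns $L$ into a bounded-below, hence closed-range, operator on its closed domain) — we conclude $\mathrm{range}(\lambda_0 - L) = \h$. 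By Lumer--Phillips, $L$ generates a $C_0$-semigroup of contractions on $\h$.

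The main obstacle is the adjoint/uniqueness point in the second paragraph: one must be careful that the dissipativity of $L$ alone does not by itself force the range condition (dissipative closed operators need not be generators), so the real content is the essential self-adjointness-type statement of Lemma~\ref{quadlemma} guaranteeing that the natural domain choice for $L$ is forced and that $L^*$ has the expected explicit form. Everything else — the two dissipativity estimates and the functional-analytic bookkeeping around Lumer--Phillips — is routine.
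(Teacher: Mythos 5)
Your proposal is correct and follows essentially the same route as the paper: transform unitarily to $L^2(\R^{2d})$ so that the operator has second-order polynomial coefficients, use Lemma~\ref{quadlemma} to identify the true adjoint with the closure of the formal adjoint $L^{\rm s}-L^{\rm as}$ (hence dissipative), and conclude by Lumer--Phillips. The only cosmetic difference is that you unpack the range condition (dense range from injectivity of $\lambda_0-L^*$, closed range from the dissipative lower bound) where the paper cites the corresponding corollary in Pazy directly.
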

\begin{proof}
Defining $v:= w/\sqrt{\mu}$ transforms the evolution problem
$$
\partial_t w=L w,\quad w\big |_{t=0}=w_0\in\h
$$
into its analog on $L^2(\R^{2d})$. The new unknown
$v(t,x,\xi)$ then satisfies the following equation
\begin{equation*}
\partial_t v = H v, \quad v \big |_{t=0} =   {w _0}/ {\sqrt{\mu}} ,
\end{equation*}
where $H$ is (formally) given by
\begin{align*}
H v= \Delta v + F \cdot \nabla v + U v,
\end{align*}
and the new ``potential'' $U=U(x,\xi)$ reads
$$
U=\frac{1}{2} \Delta A - \frac{1}{4} |\nabla A|^2.
$$
Defining $H$ on $\mathscr D(H)=C_0^\infty(\R^{2d})$, we have
$$L w = \sqrt{\mu} \, H \left(\frac{w}{\sqrt{\mu}}\right),
$$
and thus the dissipativity of $L$ on $\h$ directly carries over to
$H\equiv \overline {H \big |_{C_0^\infty}}$ on $L^2(\R^{2d})$.  Next,
we consider $H^* \big |_{C_0^\infty}$, defined via $ {\langle H f , g
 \rangle} _{L^2} = {\langle f ,H^* g \rangle} _{L^2}$, for $f,g \in
C_0^\infty(\R^{2d})$.  Due to the definitions \eqref{A} and
\eqref{fdef}, the operator $H^*\big |_{C_0^\infty}$ is exactly of the
form needed in order to apply Lemma \ref{quadlemma}.
Thus, $H^* \equiv \overline{H^*\big |_{C_0^\infty}}$ is also
dissipative (on all of its domain). Hence,
the Lumer-Phillips Theorem (see \cite{Pa}, Section 1.4) implies that $H$ is the generator of a
$C_0$-semigroup on $L^2(\R^{2d})$, denoted by $\E^{H
 t}$.

Reversing the transformation $w\to v$ then implies that $L$ is the generator of the $C_0$-semigroup $U_t$ on $\h$, given by
$$
U_t w_0 = \sqrt{\mu} \, \E^{H t} \left(\frac{w_0}{\sqrt{\mu}}\right).
$$
This finishes the proof.
\end{proof}

\subsection{Semigroup properties on $\h$}
The above lemma shows that the unperturbed WFP equation
$$
\partial_t w=L w,\quad w\big |_{t=0}=w_0 \in \h,
$$
has, for all $w_0\in\h$, a unique mild solution $w\in C([0,\infty),\h)$, where $w(t,x,\xi)= U_t w_0(x,\xi)$, with $U_t$ defined above.
Obviously we also have $U_t \mu = \mu$, by \eqref{steadyeq}.
Moreover, in \cite{SCDM} the Green's function of $U_t$ was computed explicitly. It shows that
$U_t$ conserves mass, i.e.
$$
\iint_{\R^{2d}} w(t, x,\xi) \, \D x \, \D \xi = \iint_{\R^{2d}}
w_0(x,\xi) \, \D x \, \D \xi, \quad \forall \, t \geq 0.
$$
Next, we define
\begin{equation}\label{perpspace}
\h^\perp:= \{ w \in \h: w \perp \mu \}\subset \h,
\end{equation}
which is a closed subset of $\h$. Note that $w \perp \mu$ simply means
that
$$
{\langle w, \mu \rangle}_\h \equiv \iint_{\R^{2d}} w (x,\xi) \, \D x \, \D \xi = 0.
$$
Hence, we have for $w\in C_0^\infty(\R^{2d})$, using \eqref{Ls}:
$$
 {\langle L^{\rm as}w, \mu \rangle}_\h \equiv \iint_{\R^{2d}}L^{\rm as} w (x,\xi) \, \D x \, \D \xi = 0.
$$
Thus, $L^{\rm as}:\,\h^\perp\cap \mathscr D(L^{\rm as})\to \h^\perp$. Moreover,
$L^{\rm s}:\,\h^\perp\cap \mathscr D(L^{\rm s})\to \h^\perp$, since $\h^\perp$ is spanned by the eigenfunctions of $L^{\rm s}$ (except of $\mu$).
To sum up, the operators $L^{\rm s}$ and $L^{\rm as}$ are simultaneously reducible on the two subspaces
$\h=\text{span}[\mu]\oplus\h^\perp$.

We also have that $U_t$ maps $\h^\perp$ into itself, since
for $w_0 \in \h^\perp$ the conservation of mass implies
\begin{equation}\label{mass-conserv}
{\langle U_t w_0, \mu \rangle}_\h \equiv \iint_{\R^{2d}} w(t, x,\xi) \, \D x \, \D \xi =
\iint_{\R^{2d}} w_0(x,\xi) \, \D x \, \D \xi = 0, \quad \forall \, t\geq 0.
\end{equation}
Lemma \ref{lemsemi} allows us to prove that $L$ has a spectral gap in
$\h$, in the sense that
\begin{equation}\label{spectrum-L}
 \sigma(L)\setminus\{0\} \subset \{z\in\mathbb{C}: \text{Re}\,z\le -\sigma\}.
\end{equation}
\begin{proposition} \label{prop}
It holds
$$
{\| L ^{-1} \|}_{\mathscr B(\h ^\perp)} \leq \frac{1}{\sigma}\, ,
$$
where $\sigma >0$ is defined in \eqref{conv_A}.
\end{proposition}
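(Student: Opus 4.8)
The plan is to combine the dissipativity identity obtained in the proof of Lemma~\ref{disslemma} with a Poincaré (spectral gap) inequality for the Gaussian probability measure $\mu\,\D x\,\D\xi$, and then to read off the bound on $L^{-1}$ from the resulting exponential decay of the semigroup $U_t$ on $\h^\perp$.

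First I would record the key quantitative estimate. From the computation in the proof of Lemma~\ref{disslemma}, for $w\in C_0^\infty(\R^{2d})$ one has $\mathrm{Re}\,\langle Lw,w\rangle_\h = -\iint_{\R^{2d}}\mu\,|\nabla(w/\mu)|^2\,\D x\,\D\xi$. On the other hand, since $\mu=c\,\E^{-A}$ with $A$ the quadratic form \eqref{A}, the convexity bound \eqref{conv_A} is exactly the Bakry--Émery condition, which yields (see \cite{AMTU}) the Poincaré inequality for the probability measure $\mu\,\D x\,\D\xi$:
$$
\iint_{\R^{2d}}|g|^2\,\mu\,\D x\,\D\xi \,\le\, \frac1\sigma \iint_{\R^{2d}}|\nabla g|^2\,\mu\,\D x\,\D\xi
\qquad\text{whenever}\qquad \iint_{\R^{2d}} g\,\mu\,\D x\,\D\xi=0 .
$$
Applying this with $g=w/\mu$ for $w\in C_0^\infty(\R^{2d})\cap\h^\perp$ — so that $\iint g\,\mu\,\D x\,\D\xi=\iint w\,\D x\,\D\xi=\langle w,\mu\rangle_\h=0$, while $\|w\|_\h^2=\iint|g|^2\mu\,\D x\,\D\xi$ and $\iint\mu|\nabla(w/\mu)|^2\,\D x\,\D\xi=\iint|\nabla g|^2\mu\,\D x\,\D\xi$ — I obtain the uniform dissipativity bound
$$
-\,\mathrm{Re}\,\langle Lw,w\rangle_\h \,\ge\, \sigma\,\|w\|_\h^2,\qquad w\in C_0^\infty(\R^{2d})\cap\h^\perp ,
$$
which extends to all $w\in\mathscr D(L)\cap\h^\perp$ by density, $C_0^\infty$ being a core for $L$ and $\h^\perp$ being invariant under $L^{\rm s}$ and $L^{\rm as}$.

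It then remains to pass to the semigroup. By Lemma~\ref{lemsemi}, $L$ generates the contraction semigroup $U_t$, and by the mass conservation \eqref{mass-conserv} it leaves $\h^\perp$ invariant. For $w_0\in\mathscr D(L)\cap\h^\perp$ the curve $t\mapsto U_tw_0$ stays in $\mathscr D(L)\cap\h^\perp$ and satisfies $\frac{\D}{\D t}\|U_tw_0\|_\h^2 = 2\,\mathrm{Re}\,\langle LU_tw_0,U_tw_0\rangle_\h \le -2\sigma\|U_tw_0\|_\h^2$, so Gronwall's lemma gives $\|U_tw_0\|_\h\le\E^{-\sigma t}\|w_0\|_\h$; by density this holds for every $w_0\in\h^\perp$, i.e.\ $\|U_t\|_{\mathscr B(\h^\perp)}\le\E^{-\sigma t}$. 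Consequently $0$ lies in the resolvent set of $L|_{\h^\perp}$, with $L^{-1}=-\int_0^\infty U_t\,\D t$ on $\h^\perp$ (the integral converging in $\mathscr B(\h^\perp)$), and hence $\|L^{-1}\|_{\mathscr B(\h^\perp)}\le\int_0^\infty\E^{-\sigma t}\,\D t=1/\sigma$; as a by-product this also establishes the spectral gap \eqref{spectrum-L}.

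The main technical point is the Poincaré inequality with the sharp constant $\sigma$: this is precisely where the Bakry--Émery convexity \eqref{conv_A} enters, and the one delicate point is that the inequality, classical for test functions, is being applied to $g=w/\mu$ with $w\in\mathscr D(L)\cap\h^\perp$. This is legitimate because $C_0^\infty$ is dense in the weighted Sobolev space $H^1(\mu\,\D x\,\D\xi)$, and $\iint\mu|\nabla(w/\mu)|^2\,\D x\,\D\xi$ is finite on $\mathscr D(L)$ (a consequence of the core property: one approximates $w$ in graph norm by $C_0^\infty$ functions and uses the identity of Lemma~\ref{disslemma} to see that $\sqrt{\mu}\,\nabla(w/\mu)$ is a Cauchy sequence in $L^2$).
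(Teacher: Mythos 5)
Your proof is correct, and it reaches the same resolvent bound by the same final step (exponential decay of $U_t$ on $\h^\perp$ at rate $\sigma$, then the Laplace-transform representation of the resolvent). The one place where you genuinely diverge from the paper is in how that decay is obtained: the paper invokes the entropy-method result \cite[Theorem 2.19]{AMTU} as a black box for the non-symmetric equation, quoting \eqref{semigr1} directly, whereas you derive the decay from the $L^2$-coercivity estimate $-\mathrm{Re}\,\langle Lw,w\rangle_\h\ge\sigma\|w\|_\h^2$ on $\h^\perp$ (dissipativity identity of Lemma~\ref{disslemma} plus the Bakry--\'Emery Poincar\'e inequality) together with Gronwall. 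That coercivity estimate is in fact exactly the paper's own Lemma~\ref{coercL}, estimate \eqref{coerc-0}, which the paper proves immediately \emph{after} Proposition~\ref{prop} for a different purpose; so your argument amounts to a self-contained merger of Proposition~\ref{prop} and Lemma~\ref{coercL}, avoiding the external citation. What you gain is transparency and independence from the entropy machinery of \cite{AMTU}; what the paper's route buys is brevity and the fact that \cite[Theorem 2.19]{AMTU} also yields decay for initial data that are merely normalized in mass (not assumed in $\mathscr D(L)$), sidestepping the density/core issue you correctly flag at the end. That issue is handled cleanly by applying the Poincar\'e inequality to $g_n=w_n/\mu$ for approximating $w_n\in C_0^\infty$ (subtracting the mean $\iint w_n\,\D x\,\D\xi\to 0$) and passing to the limit in the graph norm, so it is a genuine but minor gap to fill, not an obstruction.
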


\begin{proof}
 Condition \eqref{conv_A} implies that $L^{\rm s}$
 has a spectral gap of size $\sigma >0$ (cf.\ \S3.2 in \cite{AMTU}, e.g.). Moreover, \cite[Theorem 2.19]{AMTU} also yields exponential decay (with the same rate) for the non-symmetric WFP equation:
\begin{align} \label{semigr1}
{\|\, U_t(w_0-\mu) \|}_{\h} \le {\E^{-\sigma t}\| \, w_0-\mu \, \|}_{\h}.
\end{align}
Here, $w_0\in\h$ has to satisfy $ \iint _{\R^{2d}} w_0 \; \D x \D \xi = \iint
_{\R^{2d}} \mu \; \D x \D \xi =1 $. By the discussion above, we know that $L\big |_{\h^\perp}$ is the generator of $U_t\big |_{\h^\perp}$. Hence, \eqref{semigr1} implies
\begin{align} \label{resolvent}
{\| (L-z)^{-1}\, \|}_{\mathscr{B}(\h^{\perp})} \le
\frac{1}{\text{\rm Re} \, z + \sigma},\quad \forall\; z \in \mathbb{C}, \ \text{Re$ \, z > -\sigma$,}
\end{align}
which proves the assertion for $z=0$.
\end{proof}


As a final preparatory step in this section, we shall prove more detailed coercivity properties of $L$ within $\h^\perp$. We shall denote $\h^1 := \{ w \in \h\, :\, \nabla w \in \h\}$, and $\h^{-1}$ will denote its dual.

\begin{lemma}\label{coercL}
In $\h^\perp$ the operator $L$ satisfies
\begin{equation}\label{coerc-0}
- \emph{Re}\,\left\langle Lw, w \right\rangle_{\mathcal H}
\ge \sigma \, \| w \|_{\mathcal H}^2.
\end{equation}
Similarly, there exists a constant $0< \alpha < \sigma$, such that
\begin{equation}\label{coerc-1}
- \emph{Re}\,\left\langle Lw, w \right\rangle_{\mathcal H}
\ge \alpha \, \| w \|_{\mathcal H^1}^2,\quad\forall\,w\in\h^\perp\cap\h^1.
\end{equation}
\end{lemma}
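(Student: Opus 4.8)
The plan is to reduce both estimates to the symmetric part $L^{\rm s}$ of $L$ and then exploit that the confining potential $A$ is \emph{quadratic}, so that $\Delta A$ is constant. First I would recall from the computation in the proof of Lemma~\ref{disslemma} that, for $w\in C_0^\infty(\R^{2d})$,
$$
\text{Re}\,\langle Lw,w\rangle_{\h}=\langle L^{\rm s}w,w\rangle_{\h}=-\iint_{\R^{2d}}\mu\,\Big|\nabla\big(\tfrac{w}{\mu}\big)\Big|^2\,\D x\,\D\xi,
$$
since $\text{Re}\,\langle L^{\rm as}w,w\rangle_{\h}=0$. Thus it suffices to prove \eqref{coerc-0}--\eqref{coerc-1} for $L^{\rm s}$ on the core $C_0^\infty$, and then extend to $\h^\perp\cap\mathscr D(L)$, resp.\ $\h^\perp\cap\h^1$, by density (recall $L=\overline{L|_{C_0^\infty}}$).

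For \eqref{coerc-0} I would use that, by \eqref{conv_A}, the symmetric operator $L^{\rm s}$ has a spectral gap of size $\sigma$ on $\h$ — exactly the ingredient recalled in the proof of Proposition~\ref{prop}, cf.\ \S3.2 of \cite{AMTU} — together with the fact stated above that $\ker L^{\rm s}=\text{span}[\mu]$ and that $\h^\perp$ is spanned by the remaining eigenfunctions of $L^{\rm s}$. Hence $-\langle L^{\rm s}w,w\rangle_{\h}\ge\sigma\|w\|_{\h}^2$ on $\h^\perp$, which with the identity above yields \eqref{coerc-0}. (Equivalently, this is the weighted Poincar\'e inequality for the log-concave measure $\mu\,\D x\,\D\xi$ applied to $h=w/\mu$, with $\iint\mu h=0$.)

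For \eqref{coerc-1} I would establish the exact identity
$$
-\text{Re}\,\langle Lw,w\rangle_{\h}=\|\nabla w\|_{\h}^2-\iint_{\R^{2d}}\frac{\Delta A}{\mu}\,|w|^2\,\D x\,\D\xi=\|\nabla w\|_{\h}^2-2d\,\|w\|_{\h}^2 .
$$
This comes from writing $\nabla(w/\mu)=\mu^{-1}(\nabla w+w\nabla A)$, expanding the square, using $\mu^{-1}\nabla A=\nabla(\mu^{-1})$ (since $\mu^{-1}=c^{-1}\E^A$) and $\Delta(\mu^{-1})=\mu^{-1}(\Delta A+|\nabla A|^2)$, and integrating the cross term $\mu^{-1}\nabla A\cdot\nabla|w|^2$ by parts; then the $|\nabla A|^2$-terms cancel, and finally $\Delta A\equiv 2d$ by \eqref{A}. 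Combining with \eqref{coerc-0}: putting $D:=-\text{Re}\,\langle Lw,w\rangle_{\h}\ge0$ we get $\|\nabla w\|_{\h}^2=D+2d\|w\|_{\h}^2$ and $\|w\|_{\h}^2\le D/\sigma$, hence $\|w\|_{\h^1}^2=\|w\|_{\h}^2+\|\nabla w\|_{\h}^2\le(1+(2d+1)/\sigma)\,D$, i.e.\ \eqref{coerc-1} with $\alpha:=\sigma/(\sigma+2d+1)\in(0,\sigma)$.

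Apart from this routine algebra, the main obstacle will be the functional-analytic bookkeeping: justifying the boundary-term-free integration by parts, and — more importantly — the passage from $C_0^\infty$ (where all identities are transparent) to $\h^\perp\cap\mathscr D(L)$ and $\h^\perp\cap\h^1$. For this one uses that $C_0^\infty$ is a core for $L$, that the identity above shows $\mathscr D(L)\subset\h^1$ with $\|\nabla w\|_{\h}$ controlled by the graph norm, and that $\|\nabla w\|_{\h}^2\le\liminf_n\|\nabla w_n\|_{\h}^2$ along the approximating sequence — which passes in precisely the direction that preserves the inequality \eqref{coerc-1}.
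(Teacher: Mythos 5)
Your proposal is correct and follows essentially the same route as the paper: estimate \eqref{coerc-0} is the weighted Poincar\'e inequality applied to $f=w/\mu$, and \eqref{coerc-1} rests on the same identity $\iint\mu|\nabla(w/\mu)|^2 = \|\nabla w\|_\h^2-2d\|w\|_\h^2$ coming from $\Delta(\log\mu)=-2d$, yielding the identical constant $\alpha=\sigma/(\sigma+2d+1)$. The only difference is cosmetic — you solve the integrated identity for $\|\nabla w\|_\h^2$ and substitute, whereas the paper splits the Dirichlet form into an $\alpha$ and a $(1-\alpha)$ part before applying Poincar\'e — and your extra care about the density/closure bookkeeping is a welcome addition the paper leaves implicit.
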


\begin{proof}
 We shall use the weighted Poincar\'e inequality (see \cite{AMTU}): For any
 function $f \in L^2(\R^{2d},\mu \D x \, \D\xi)$, such that $\iint_{\R^{2d}} f \mu\, \D x \, \D
 \xi=0$, it holds:
\begin{align}\label{Poinc}
 \iint_{\R^{2d}} |f|^2 \, \mu \; \D x \, \D \xi \le \frac{1}
 {\sigma} \, \iint_{\R^{2d}} \mu \, |\nabla f|^2 \; \D x \, \D \xi.
\end{align}
Estimate (\ref{coerc-0}) then readily follows by setting $f= w / \mu $:
$$
\text{Re}\,\langle L w, w \rangle_{\mathcal H}   = - \iint_{\R^{2d}} \mu \left |
 \nabla \left( \frac{w}{\mu}\right)\right|^2\; \D x \, \D \xi \le
-\sigma \, \iint_{\R^{2d}} \frac {|w|^2}{\mu}\; \D x \, \D \xi.
$$

In order to prove assertion (\ref{coerc-1}), we note that
$$
\mu \left | \nabla \left( \frac{w}{\mu} \right)\right | ^2 = \frac{
  |\nabla w|^2}{\mu} -2d \, \frac { |w|^2}{\mu} - \text{div} \left( |w|^2
\,    \frac{\nabla \mu}{\mu^2}\right),
$$
taking into account that $\Delta (\log \mu) = -2d $. Next, let $0 <\alpha < 1 $ (to be chosen later), and write
\begin{align*}
\text{Re}\,\langle L w, w \rangle_{\mathcal H}
=& - \alpha\iint_{\R^{2d}} \mu \left | \nabla \left(
    \frac{w}{\mu}\right)\right|^2\; \D x \, \D \xi - (1-\alpha) \,
\iint_{\R^{2d}} \mu \left | \nabla \left(
    \frac{w}{\mu}\right)\right|^2\; \D x \, \D \xi \\
= & - \alpha \, \iint_{\R^{2d}} \frac{ |\nabla w |^2}{\mu} \; \D x \, \D
\xi
+2d \, \alpha \, \iint_{\R^{2d}} \frac { |w|^2}{\mu}\; \D x \, \D \xi \\
& - (1-\alpha) \, \iint_{\R^{2d}} \mu \left | \nabla \left(
   \frac{w}{\mu}\right)\right|^2\; \D x \, \D \xi.
\end{align*}
Inequality (\ref{Poinc}) for $f=w/\mu$ then implies:
$$
- (1-\alpha)\iint_{\R^{2d}} \mu \left | \nabla \left(
   \frac{w}{\mu}\right)\right|^2\; \D x \, \D \xi \le
- \sigma (1-\alpha) \, \iint_{\R^{2d}}\frac{|w|^2}{\mu} \; \D x \, \D \xi.
$$
Therefore
\begin{align*}
\text{Re}\,\langle L w, w \rangle_{\mathcal H} \le  - \alpha \, \iint_{\R^{2d}}
\frac{ |\nabla w |^2}{\mu} \; \D x \, \D \xi  +
\left( 2d \, \alpha -  \sigma \, (1-\alpha) \right) \,
\iint_{\R^{2d}} \frac { |w|^2}{\mu}\; \D x \, \D \xi .
\end{align*}
The choice $ \alpha=\sigma/ (\sigma+2d+1)$ yields assertion (\ref{coerc-1}).
\end{proof}

In the next section we shall study the operator $L$ in the larger functional spaces $\h_m$ (see Definition  \ref{hmDef}).
This is necessary since the perturbation operator $\Theta[V_0]$ is unbounded in $\h$, even for $V_0\in C_0^\infty(\R^d)$, cf. Remark \ref{unbounded}.


\section{Study of the unperturbed problem in $\h_m$}\label{sec_new}

In this section, we adapt the general procedure outlined in \cite{GMM, Mo} to the specific model at hand. One of the main differences to the models studied in \cite{GMM} is
the fact that the WFP operator includes a diffusion in $x$. Nevertheless, we shall follow the main ideas of \cite{GMM}. In a first step, this requires
us to gain sufficient control on the action of $U_t$ on $\h_m$. After that, we establish a new decomposition of $L$ (not to be confused with the decomposition $L = L^{\rm s} + L^{\rm as}$ used above)
in order to lift resolvent estimates onto the enlarged space $\h_m\supset \h$. Together with the Gearhart-Pr\"uss Theorem (cf.\ Theorem V.1.11 in \cite{EN}), these estimates
will finally allow us to infer exponential decay of $U_t$ on $\h_m$.

\subsection{Mathematical preliminaries} In \eqref{Ls} we decomposed the unperturbed evolution operator as $L = L^{s} + L^{\rm as}$.
As a first, from basic property of the spaces $\h_m$ (see Definition \ref{hmDef}), we note that
$L^{\rm as}$ is still anti-symmetric in $\h_m$, $m\in \N$.
\begin{lemma}
It holds
\begin{align}\label{antiH}
\emph{Re}\,\langle L^{\rm as} w, w \rangle_{ {\mathcal H}_m} =0, \quad \forall \, m \in \N.
\end{align}
\end{lemma}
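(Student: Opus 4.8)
The plan is to verify the anti-symmetry of $L^{\rm as}$ in $\h_m$ by a direct integration-by-parts computation, mirroring the argument already carried out for $\h$ in the proof of Lemma~\ref{disslemma}, but now with the weight $\nu_m^{-1} = 1 + A^m$ in place of $\mu^{-1}$. First I would recall that $L^{\rm as} w = \diver(Fw)$ and that, by \eqref{identity}, $\diver(F\mu) = 0$, equivalently $F\cdot\nabla A = \diver F$ (using $\nabla A = -\mu^{-1}\nabla\mu$). Since $F$ is divergence-free against $\mu$, and since $A$ is the natural potential, the key structural fact I expect to use is that $F \cdot \nabla A = \diver F$ is actually a \emph{constant} here: from \eqref{fdef}, $F = \tfrac12(-x-3\xi,\ x+\xi)^T$, so $\diver F = \tfrac12(-1+1) = 0$, i.e. $F$ is genuinely divergence-free, $\diver F = 0$, and moreover $F\cdot\nabla A = 0$ pointwise.

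With that in hand the computation is short. For $w \in C_0^\infty(\R^{2d})$ I would write
\begin{align*}
\mathrm{Re}\,\langle L^{\rm as} w, w\rangle_{\h_m}
&= \mathrm{Re} \iint_{\R^{2d}} \frac{\bar w}{\nu_m}\,\diver(Fw)\,\D x\,\D\xi
= -\mathrm{Re} \iint_{\R^{2d}} F\cdot \nabla\!\left(\frac{\bar w}{\nu_m}\right) w\,\D x\,\D\xi \\
&= -\mathrm{Re} \iint_{\R^{2d}} \frac{F\cdot\nabla\bar w}{\nu_m}\, w\,\D x\,\D\xi
 - \mathrm{Re} \iint_{\R^{2d}} |w|^2\, F\cdot\nabla\!\left(\frac{1}{\nu_m}\right)\,\D x\,\D\xi.
\end{align*}
The first term equals $-\tfrac12 \iint \nu_m^{-1}\, F\cdot\nabla|w|^2 = \tfrac12\iint |w|^2\,\diver(\nu_m^{-1}F) = \tfrac12\iint|w|^2\big(\nu_m^{-1}\diver F + F\cdot\nabla(\nu_m^{-1})\big)$. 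Since $\diver F = 0$, this is $\tfrac12\iint|w|^2\,F\cdot\nabla(\nu_m^{-1})$. Substituting back, the two contributions combine to $-\tfrac12\iint|w|^2\,F\cdot\nabla(\nu_m^{-1})$. Finally, $\nabla(\nu_m^{-1}) = \nabla(1+A^m) = m A^{m-1}\nabla A$, and since $F\cdot\nabla A = 0$ pointwise we get $F\cdot\nabla(\nu_m^{-1}) = 0$, hence the whole expression vanishes. This proves \eqref{antiH} on $C_0^\infty(\R^{2d})$, and it extends to all $w$ in the domain of $L^{\rm as}$ in $\h_m$ by density (using that $C_0^\infty$ is a core, as follows from Lemma~\ref{quadlemma} applied after the weight transformation, exactly as in the proof of Lemma~\ref{lemsemi}).

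The only genuinely load-bearing point — and the thing I would want to double-check carefully rather than the routine integration by parts — is the pointwise identity $F\cdot\nabla A = 0$ and $\diver F = 0$ for the specific $A$ and $F$ of \eqref{A}, \eqref{fdef}; everything else is boundary-term bookkeeping plus a density argument. A quick check: $\nabla A = \tfrac12(x+\xi,\ x+3\xi)^T$ and $F = \tfrac12(-x-3\xi,\ x+\xi)^T$, so $F\cdot\nabla A = \tfrac14\big[(-x-3\xi)\cdot(x+\xi) + (x+\xi)\cdot(x+3\xi)\big] = \tfrac14(x+\xi)\cdot\big[(x+3\xi)-(x+3\xi)\big] = 0$, confirming the claim. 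Thus the anti-symmetry of $L^{\rm as}$ survives verbatim in every $\h_m$ because the weight only depends on $x,\xi$ through $A$, and $F$ is tangent to the level sets of $A$ and divergence-free; this is precisely the geometric reason the statement holds for all $m\in\N$ simultaneously.
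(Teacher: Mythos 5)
Your proof is correct and follows essentially the same route as the paper's: both arguments reduce to the two pointwise identities $\diver F = 0$ and $F\cdot\nabla A = 0$ (which you verify explicitly and correctly, apart from the cosmetic slip of writing $-1+1$ instead of $-d+d$ for $\diver F$), combined with integration by parts against the weight $\nu_m^{-1}=1+A^m$. The paper simply orders the steps slightly differently (using $\diver F=0$ first to write $\diver(Fw)=F\cdot\nabla w$, then integrating by parts once), so no substantive difference.
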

\begin{proof}
A straightforward calculation yields $\diver(F) =0$. Hence
\begin{align*}
 \text{Re}\,\left\langle L^{\rm as} w , w \right\rangle_{\h_m} =
 & \,\text{Re}\, \iint_{\R^{2d}} \frac{\bar w}{\nu_m} \, \text{div} (F \, w)
 \, \D x\, \D \xi  = \,
 \frac{1}{2} \, \iint_{\R^{2d}} F \cdot \nabla \left(|w|^2\right) \,
 \nu_m^{-1}  \, \D x\, \D \xi  \\
 = & \ -\frac{1}{2} \, \iint_{\R^{2d}} |w|^2
 \, m \, A^{m-1} \, F \cdot \nabla A \, \D x\, \D \xi ,
\end{align*}
after integrating by parts and using $\nu_m^{-1} =
1+A^m(x,\xi)$. Now, it is easily seen from \eqref{fdef} that $F\cdot
\nabla A=0$, which implies \eqref{antiH}.
\end{proof}
The proof shows that, in the definition of $\h_m$, it is important to
choose the weight $\nu_m$ as a (smooth) function of
$A = - \log \frac\mu{c} $. Otherwise the fundamental property \eqref{antiH} would no
longer be true. Also note that in contrast to $L^{\rm as}$, the operator $L^{\rm s}$ is \emph{not} symmetric in $\h_m$. 
Before studying further properties of $L$ in $\h_m$ we state the following technical lemma. 
In order to keep the presentation simple, we shall not attempt to give the optimal constants.

\begin{lemma} \label{techlemma}

Let $A=-\log \frac\mu{c}$, as given in \eqref{A}. Then the following properties hold:
\begin{itemize}
 \item[(a)]
There exists a constant $a_1>0$, such that for all $m\in \N$ it holds:
  \begin{align*} \label{ass1}
    a_1 \, (1+A^m)\le A^{m-1} \, \left|\nabla A\right|^2, \quad \textrm{for all} \;|x|^2+|\xi|^2 \ge 12.
  \end{align*}
\item[(b)] \label{ass3} 
  Choosing $K:=\frac{4}{a_1}$ it holds for all integer $m\ge Kd$:
  \begin{align*}  4d \, \left( 1+A^m \right) \le m  A^{m-1} \,
    \left|\nabla A\right|^2, \quad \textrm{for all} \; |x|^2+|\xi|^2 \ge 12.
  \end{align*}
\item[(c)] \label{ass2} There exists a constant $a_2>1$ such that
  \begin{align*}
    |\nabla A|^2 \le a_2 \, A, \quad \forall\,x,\, \xi \in \R^{d}.
  \end{align*}
\item[(d)] \label{ass4} For any $|x|,|\xi| \ge \frac{1}{\varepsilon}$ and $m\ge 1$, it
  holds
  \begin{align*}
    \Delta \left(1+A^m\right) \le m \, A^{m-1} \, \left|\nabla
      A\right|^2 \, \varepsilon^2 \, 6(m-1+3d).
  \end{align*}
\end{itemize}
\end{lemma}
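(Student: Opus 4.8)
The plan is to verify each of the four estimates by direct computation, exploiting the explicit quadratic form of $A$. Recall from \eqref{A} that $A(x,\xi)=\frac14(|x|^2+2x\cdot\xi+3|\xi|^2)$, so $A$ is a positive definite quadratic form in the $2d$ variables $(x,\xi)$: its Hessian is the constant matrix ${\rm Hess}\,A=\frac14\begin{pmatrix}I&I\\I&3I\end{pmatrix}$, whose eigenvalues are $\frac14(2\pm\sqrt2)$, both strictly positive. Consequently $\nabla A = ({\rm Hess}\,A)\,(x,\xi)^{\!\top}$ is linear, and both $|\nabla A|^2$ and $A$ are positive definite quadratic forms. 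The whole lemma is essentially a collection of comparison statements between two positive definite quadratic forms plus some large-radius bookkeeping, so the strategy throughout is: reduce to homogeneous quadratic expressions, use the pinching $c_-|(x,\xi)|^2\le A\le c_+|(x,\xi)|^2$ and the analogous bounds for $|\nabla A|^2$, and then handle the lower-order ``$1+$'' terms by restricting to the region $|x|^2+|\xi|^2\ge 12$ (resp. $|x|,|\xi|\ge1/\e$).

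For part (c), since $|\nabla A|^2$ and $A$ are both positive definite quadratic forms on $\R^{2d}$, the ratio $|\nabla A|^2/A$ is bounded above (and below) by the extreme generalized eigenvalues of the pencil $({\rm Hess}\,|\nabla A|^2\,,\,2\,{\rm Hess}\,A)$; one simply sets $a_2$ to be that maximum, and $a_2>1$ can be arranged (or checked directly from the explicit matrices). For part (a), write $1+A^m\le (1+c)A^m$ on $\{A\ge c\}$ for a suitable $c>0$ — and note $|x|^2+|\xi|^2\ge 12$ forces $A\ge c_-\cdot 12$, which we can take $\ge c$ — so it suffices to show $A^{m-1}|\nabla A|^2\ge a_1' A^m$, i.e. $|\nabla A|^2\ge a_1' A$, which is exactly the lower Rayleigh-quotient bound dual to (c); then absorb the constant $(1+c)$ into $a_1$. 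Part (b) is then immediate from (a): choosing $K=4/a_1$, for $m\ge Kd$ we get $m\,a_1\ge Kd\,a_1=4d$, so $m A^{m-1}|\nabla A|^2\ge m a_1(1+A^m)\ge 4d(1+A^m)$ on the same region. Part (d) requires expanding $\Delta(1+A^m)=\Delta(A^m)=m(m-1)A^{m-2}|\nabla A|^2+mA^{m-1}\Delta A$; since $\Delta A=\frac14(2d+3\cdot 2d)=2d$ is constant, this equals $mA^{m-2}\big((m-1)|\nabla A|^2+2dA\big)$. Using (c) to bound $2dA\le 2d|\nabla A|^2/(\text{something})$ — more simply, on $|x|,|\xi|\ge1/\e$ we have $|(x,\xi)|^2\ge 2/\e^2$ hence $A\le c_+|(x,\xi)|^2$ and $|\nabla A|^2\ge c_-'|(x,\xi)|^2\ge 2c_-'/\e^2$, which lets us trade a factor $A$ for $\e^2|\nabla A|^2$ times a constant; collecting the coefficients gives the stated bound with the explicit constant $6(m-1+3d)$, where the $\e^2$ appears because each surplus power of $|(x,\xi)|^2$ in $A^{m-2}A$ versus $A^{m-2}|\nabla A|^2$ is controlled by $\e^2$ on that region.

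The only mildly delicate points — and where I would be most careful — are (i) getting the numerical constants to come out as stated (in particular $K\in(1,144]$, which forces $a_1\ge 4/144=1/36$, so the lower bound $|\nabla A|^2\ge a_1 A$ must be quantitatively good enough; this should follow from the explicit eigenvalues of ${\rm Hess}\,A$ and ${\rm Hess}\,|\nabla A|^2$, and the region restriction $|x|^2+|\xi|^2\ge12$ is presumably calibrated precisely so that the $1+A^m\le(1+c)A^m$ step costs little), and (ii) in (d), correctly tracking how the restriction $|x|,|\xi|\ge1/\e$ (each coordinate block large, not just the total norm) converts the lower-order term $2dA^{m-1}$ into something bounded by $\e^2$ times $mA^{m-1}|\nabla A|^2$ with the claimed constant $6(m-1+3d)$. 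Neither step is conceptually hard; it is purely a matter of choosing $a_1,a_2$ as the relevant Rayleigh quotients of the two constant positive definite matrices and then doing the arithmetic. I would present (c) first (it is the basic quadratic-form comparison), then (a), then deduce (b), and finally do the Laplacian computation for (d).
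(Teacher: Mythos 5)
Your proposal is correct and follows essentially the same route as the paper: both rest on two-sided quadratic pinchings of $A$ and $|\nabla A|^2$ by $|x|^2+|\xi|^2$ (the paper gets $\tfrac1{12}(|x|^2+|\xi|^2)\le A\le |x|^2+|\xi|^2$ and $\tfrac1{18}(|x|^2+|\xi|^2)\le|\nabla A|^2\le 3(|x|^2+|\xi|^2)$ via Young's inequality, where you invoke the extreme eigenvalues of the two constant positive definite forms — a cosmetic difference), then deduce (c) and (a) from these, get (b) as the immediate corollary of (a), and prove (d) from the identity $\Delta(A^m)=mA^{m-1}|\nabla A|^2\bigl(\tfrac{m-1}{A}+\tfrac{2d}{|\nabla A|^2}\bigr)$ together with the lower bounds on $A$ and $|\nabla A|^2$ on the region $|x|,|\xi|\ge 1/\varepsilon$. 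The only things to tidy up are the factor of $2$ in your Hessian (as a quadratic form $A=\tfrac12 y^\top({\rm Hess}\,A)y$) and the observation that your step $1+A^m\le(1+c)A^m$ on $\{A\ge c\}$ needs $c\ge1$, which the restriction $|x|^2+|\xi|^2\ge12$ indeed guarantees.
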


\begin{proof}
Using Young's inequality we easily obtain
\begin{equation}\label{A-bound}
\frac{1}{12}(|x|^2+|\xi|^2) \le A(x,\xi) \le |x|^2+|\xi|^2,
\end{equation}
\begin{equation}\label{gradA-bound}
\frac{1}{18}(|x|^2+|\xi|^2) \le |\nabla A(x,\xi)|^2 \le 3(|x|^2+|\xi|^2).
\end{equation}
This yields assertion (c). To show (a), we note from \eqref{A-bound} that
$$
 1\le A\le A^m, \quad \forall\,|x|^2+|\xi|^2 \ge 12.
$$
Hence, we obtain with \eqref{A-bound}, \eqref{gradA-bound}:
\begin{align*}
 1+A^m  \le 2 \, A^m\le 36\,A^{m-1}|\nabla A|^2,
\end{align*}
which is assertion (a). We further note that assertion (b) is a direct consequence of (a). Finally, to prove assertion (d), we compute
\begin{align*}
 \Delta \left(1+A^m\right) & = m \, A^{m-1} \, \left| \nabla A
 \right|^2 \, \left( \frac{m-1}{A} + \frac{2d}{ |\nabla A|^2} \right)\\
 & \le m \, A^{m-1} \, \left| \nabla A \right|^2 \, \varepsilon^2 \,
 {6 \, (m-1+3d)},\quad \textrm{for}\; |x|, \, |\xi|\ge
 \frac{1}{\varepsilon}.
\end{align*}\end{proof}

\begin{remark}\label{Rem4.4}
Note that the constants $a_1\ge \frac1{36}, a_2\le36$, and $K=\frac4{a_1}\le144$ can be chosen \emph{independent} of $m\in\N$ and of the spatial dimension $d\in \N$. Moreover, $K=1$ is not possible for $d=1$.
\end{remark}

\subsection{Semigroup properties on $\h_m$}
Analogously to \eqref{perpspace}, we now define the following closed subset of $\h_m$:
$$
\h_m^\perp:= \left\{ w \in \h_m: w \perp \nu_m \right\}, \quad m \in \N,
$$
which is again characterized by the zero-mass condition
\begin{equation}\label{null}
{\langle w, \nu_m \rangle}_{\h_m} \equiv \iint_{\R^{2d}} w (x,\xi) \, \D x \, \D \xi = 0.
\end{equation}
Thus we have $\h^\perp\subset \h_m^\perp\; \forall\,m\in\N$.
As before, we define $L$ on $\h_m$ via $L\equiv \overline {L \big |_{C_0^\infty}}$ which yields a closed, densely defined operator on $\h_m$ for each $m\in \N$.
In addition, we also have the following result.

\begin{lemma} \label{lemsemi-m} For each $m\in \N$, the operator $L$
 generates a $C_0$-semigroup of bounded operators on $\h_m$,
 satisfying
\begin{equation}\label{boundsemi}
\left\| U_t  \right\|_{\mathscr B(\h_m)}
\le \E^{ \beta_m \, t},\quad \beta_m\in \R.
\end{equation}
\end{lemma}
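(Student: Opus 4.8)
The idea is to mimic the proof of Lemma~\ref{lemsemi} by conjugating $L$ into a Schrödinger-type operator, but now on the weighted space $\h_m$ rather than $\h$. Write $\theta_m:=\sqrt{\nu_m^{-1}}=\sqrt{1+A^m}$ and set $v:=\theta_m w$, so that $w\in\h_m$ corresponds to $v\in L^2(\R^{2d})$ with equal norms. The evolution $\partial_t w = Lw$ becomes $\partial_t v = H_m v$ with $H_m v = \theta_m L(\theta_m^{-1} v)$. Since $L$ is (up to lower order terms) a second-order differential operator with polynomial coefficients of the form $\diver(\nabla\,\cdot\,) + (\nabla A + F)\cdot\nabla(\cdot) + (\text{l.o.t.})$, and $\theta_m, \theta_m^{-1}$ are smooth with polynomially bounded logarithmic derivatives (because $A$ is a quadratic polynomial bounded below away from zero), $H_m$ is again a differential operator whose coefficients are controlled by $A$ and its derivatives. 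The zeroth-order ("potential") term of $H_m$ will involve $\theta_m^{-1}\Delta\theta_m$ and $|\nabla\log\theta_m|^2$; using parts (a)--(d) of Lemma~\ref{techlemma} (or even just the crude bounds \eqref{A-bound}, \eqref{gradA-bound}) one shows this potential is bounded \emph{above} by some constant $\beta_m$, though it is \emph{not} bounded below — which is exactly why we only get growth $\E^{\beta_m t}$ and not a contraction.

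The rigorous route is then the Lumer--Phillips theorem applied to $H_m-\beta_m$: one must show that $H_m-\beta_m$ (defined initially on $C_0^\infty(\R^{2d})$, then closed) is dissipative on $L^2(\R^{2d})$ and that its adjoint is dissipative as well. Dissipativity of $H_m - \beta_m$: expand $\mathrm{Re}\,\langle H_m v, v\rangle_{L^2}$; the principal part $\Delta v$ gives $-\|\nabla v\|^2 \le 0$, the first-order part $b(x,\xi)\cdot\nabla v$ with $\diver b$ equal to a bounded function contributes $-\tfrac12\langle(\diver b)\,v,v\rangle$ which is $\le \beta_m'\|v\|^2$, and the potential term is $\le \beta_m''\|v\|^2$; choosing $\beta_m:=\beta_m'+\beta_m''$ yields $\mathrm{Re}\,\langle(H_m-\beta_m)v,v\rangle\le 0$. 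For the adjoint, one notes that $H_m^*$ is again a second-order differential operator with polynomial coefficients, so Lemma~\ref{quadlemma} applies: $\overline{H_m^*|_{C_0^\infty}}$ is the maximal extension and the formal dissipativity computation (identical in structure, with $F$ replaced by $-F$-type terms) carries over to all of its domain. Hence $H_m-\beta_m$ generates a $C_0$-semigroup of contractions, i.e.\ $H_m$ generates $\E^{H_m t}$ with $\|\E^{H_m t}\|_{\mathscr B(L^2)}\le\E^{\beta_m t}$.

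Finally, undoing the conjugation, $U_t w_0 = \theta_m^{-1}\,\E^{H_m t}(\theta_m w_0)$ defines a $C_0$-semigroup on $\h_m$ with generator $L$ (one checks the generator is the closure of $L|_{C_0^\infty}$, consistent with the definition of $L$ on $\h_m$ adopted above), and $\|U_t\|_{\mathscr B(\h_m)} = \|\E^{H_m t}\|_{\mathscr B(L^2)} \le \E^{\beta_m t}$, which is \eqref{boundsemi}. One should also remark that $U_t$ here agrees with the semigroup $U_t$ constructed on $\h$ in Lemma~\ref{lemsemi}, since both are given by the same explicit Green's function from \cite{SCDM}; this consistency is what makes the notation $U_t$ unambiguous across the scale of spaces $\h\subset\cdots\subset\h_m$.

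The main obstacle is the bookkeeping in verifying dissipativity of $H_m$ and its adjoint: one must check that after conjugation by $\theta_m=\sqrt{1+A^m}$ the zeroth-order coefficient really is bounded above (the terms $\theta_m^{-1}\Delta\theta_m$ and cross terms between $\nabla\log\theta_m$ and the drift $F+\nabla A$ must be estimated using that $F\cdot\nabla A = 0$ and the polynomial bounds of Lemma~\ref{techlemma}), and that the first-order drift has bounded divergence so that the integration by parts only produces an $O(\|v\|_{L^2}^2)$ term rather than something uncontrolled. None of this is deep, but it is where the constant $\beta_m$ (depending on $m$ and $d$) actually gets produced, and care is needed because, unlike in $\h$, there is genuinely no sign on the potential and hence no hope of a contraction.
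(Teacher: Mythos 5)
Your overall strategy (conjugate by $\theta_m=\sqrt{1+A^m}$ to move the problem to $L^2(\R^{2d})$, then run Lumer--Phillips) is a legitimate alternative to what the paper does, and your accounting of where the constant $\beta_m$ comes from is essentially the right computation: after conjugation the zeroth-order coefficient involves $\nabla\log\theta_m\cdot\nabla A=\tfrac{mA^{m-1}|\nabla A|^2}{2(1+A^m)}$ and $\theta_m\Delta\theta_m^{-1}$, both of which are bounded (note: bounded below as well as above --- the obstruction to a contraction is only that the bound is not $\le 0$, not that the potential is unbounded below), and $F\cdot\nabla A=0$ kills the dangerous cross term. This is the same cancellation the paper exploits.

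However, there is a genuine gap in your maximality step. You invoke Lemma~\ref{quadlemma} for $H_m^*$ on the grounds that it is ``again a second-order differential operator with polynomial coefficients.'' It is not: unlike the Gaussian conjugation in Lemma~\ref{lemsemi}, where $\nabla\log\sqrt{\mu}=-\tfrac12\nabla A$ is linear, here $\nabla\log\theta_m=\tfrac{mA^{m-1}\nabla A}{2(1+A^m)}$ is a (bounded, smooth) \emph{rational} function, so $H_m=P+B$ with $P$ polynomial and $B$ a first-order operator with bounded non-polynomial coefficients. Lemma~\ref{quadlemma} as stated does not apply, and $B$ is not a bounded operator on $L^2$ (it is first order), so it cannot simply be absorbed as a bounded perturbation; you would need either a relative-boundedness/hypoellipticity argument to control $\|\nabla v\|_{L^2}$ by $\|Pv\|_{L^2}+\|v\|_{L^2}$, or a direct verification of the range condition. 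As written, the generation claim is not established. The paper avoids this entirely by working directly in $\h_m$: it uses the anti-symmetry \eqref{antiH} of $L^{\rm as}$ in $\h_m$ (which rests on $F\cdot\nabla A=0$) together with Lemma~\ref{techlemma}(c) to get $\mathrm{Re}\,\langle Lw,w\rangle_{\h_m}\le\beta_m\|w\|^2_{\h_m}$ with $\beta_m=2d+m((m-1)a_2+2d)$, and then obtains \eqref{boundsemi} from the resulting differential inequality, the semigroup itself being the extension to $\h_m$ of the one already constructed on $\h$. If you want to keep your conjugation route, you must replace the appeal to Lemma~\ref{quadlemma} by an argument that actually handles the non-polynomial lower-order terms; otherwise the direct $\h_m$ computation is the cleaner path.
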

\begin{proof} We compute
\begin{align*}
 \text{Re}\,\langle Lw,w \rangle_{{\h_m}} =
 & \ - \iint_{\R^{2d}} \frac{ |\nabla w|^2}{\nu_m} \; \D x \, \D \xi \\
 & \ + \frac{1}{2} \iint_{\R^{2d}} |w|^2 \, \left(\Delta A^m -
    m \,
   A^{m-1} \,
 \left|\nabla A\right|^2 +\frac{ 2d}{\nu_m} \right) \; \D x \, \D \xi,
\end{align*}
by taking into account (\ref{antiH}) and the fact that $\nu_m^{-1} = 1
+ A^m(x,\xi)$. Using assertion (c) of Lemma \ref{techlemma}, we can
estimate
$$
\Delta (A^m) = m \, A^{m-2} \, \big( (m-1) \, \left|\nabla A\right|^2
 + 2d \, A \big) \le m \, \big( (m-1) \, a_2 +2d \big) \, A^{m-1}.
$$
Moreover, since, for all $x,\xi \in \R^d$: $A^{m-1}(x,\xi)\le
1+A^m(x,\xi)$, we consequently obtain
\begin{align*}
 \Delta (A^m) - m \, A^{m-1} \, \left|\nabla
   A\right|^2 +\frac{2d}{ \nu_m} \le \Delta (A^m) +\frac{ 2d}{
   \nu_m} \le \beta_m (1 + A^m),
\end{align*}
where
\begin{align*}
\beta_m := 2d + m \Big( (m-1)a_2  + 2d \Big).
\end{align*}
In summary, this yields
\begin{align*}
\text{Re}\,\left\langle Lw,w \right\rangle_{{\h_m}} \le  \beta_m \, \| w \|^2_{\h_m}.
\end{align*}
Thus, for the unperturbed evolution equation
$\partial _t w = L w$ we infer
$$
\frac{\D}{\D t} \| w \|^2_{\h_m} = 2 \, \text{Re}\,\left\langle Lw,w
\right\rangle_{{\h_m}} \le 2 \, \beta_m \, \| w \|^2_{\h_m},
$$
and the assertion follows.
\end{proof}

\begin{remark}\label{Rem4.5}
Note that $\beta_m$ \emph{cannot be negative} in Lemma \ref{lemsemi-m} since $U_t(\mu)=\mu$.
However, using some refined estimates below, we
shall find (see Proposition \ref{gap_big_space})
that the restricted semigroup $U_t\big|_{\h_m^\perp}$ is exponentially decaying, provided $m\in \N$ is sufficiently large.
To this end, we note that the two (non-orthogonal) subspaces $\h_m=\text{span}[\mu]\oplus\h_m^\perp$ are invariant under $L$ and under $U_t$ ($\forall\,m\in \N$) due
to mass conservation \eqref{mass-conserv} and \eqref{null}.
\end{remark}

As a final preparatory step, we shall need the following decomposition result for $L$, where we denote $$
 {\h}_m^1 := \{ w \in \h_m : \nabla w\in \h_m \}.
$$

\begin{proposition} \label{prop_assum_AB}
Let $m\ge Kd$ be some fixed integer, and $K$ was defined in Lemma \ref{techlemma}. Then there exists an $0<\e<1$ such that the
operator $L$ can be split into $L = L_1^\varepsilon+L_2^\varepsilon$, with $L^\e_1, L_2^\e$ defined in (\ref{L1varepsilon}) and (\ref{L2varepsilon}) and satisfying:
\begin{enumerate}
\item\label{prop_assum_AB-1} $L_1^\varepsilon : \h_m \to \h_m$ is a closed and
 unbounded operator, while $L_1^\varepsilon: \h_m^1 \to \h$ and
 $L_1^\varepsilon: \h_m \to \h^{-1}$ are bounded operators.

\item $(L_2^\varepsilon-z) : \mathcal H \to \mathcal H$ and
 $(L_2^\varepsilon-z) : \h_m \to \h_m$ are closed,
 unbounded and invertible operators for every $z \in \Omega:=\{ z\in \C : {\rm Re}\, z > - \Lambda_m\}$, where $\Lambda_m>0$ is a positive
 constant defined in \eqref{Lambda_m}.
\item The operator
 $$L_1^\varepsilon(L_2^\varepsilon-z)^{-1}:  \h_m \to \h\subset \h_m $$
   is bounded for any $z \in \Omega$.
\end{enumerate}
\end{proposition}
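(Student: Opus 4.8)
The plan is to realize the decomposition by an explicit geometric cutoff. Fix a large radius $R = 1/\e$ (to be chosen), take a smooth cutoff $\chi_\e \in C^\infty(\R^{2d})$ with $\chi_\e \equiv 1$ on $\{|x|^2+|\xi|^2 \le R^2\}$ and $\chi_\e \equiv 0$ on $\{|x|^2+|\xi|^2 \ge 4R^2\}$, and set
\begin{align}\label{L1varepsilon}
L_1^\e w := \chi_\e \big( \text{div}(\nabla w + w\nabla A) + L^{\rm as} w \big),
\end{align}
i.e. $L_1^\e$ is $L$ multiplied by the compactly supported cutoff, and
\begin{align}\label{L2varepsilon}
L_2^\e w := L w - L_1^\e w = (1-\chi_\e)\, L w.
\end{align}
Then $L_1^\e$ has coefficients supported in a fixed ball, so it is (up to the bounded, compactly supported multiplier $\chi_\e$) a second-order differential operator with polynomially bounded coefficients acting on a region where all the weights $\nu_m^{-1} = 1 + A^m$ are comparable to $1$; hence mapping properties between $\h_m$, $\h$, $\h_m^1$ and $\h^{-1}$ reduce to standard local elliptic estimates on the ball $\{|x|^2+|\xi|^2 \le 4R^2\}$. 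This gives item \eqref{prop_assum_AB-1}: $L_1^\e$ is closed and unbounded on $\h_m$ (it contains all second derivatives), but on $\h_m^1$ it gains no weight and loses one derivative, landing in $L^2$ of the ball $\subset \h$, and dually $L_1^\e:\h_m\to\h^{-1}$ is bounded. For item (3), once (1) holds it suffices to know $(L_2^\e - z)^{-1}$ maps $\h_m$ into $\h_m^1$ boundedly for $z\in\Omega$; this is exactly the elliptic gain one reads off from the coercivity estimate below, so the composition $L_1^\e (L_2^\e-z)^{-1}:\h_m\to\h_m^1\to\h$ is bounded.

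The heart of the matter is item (2): invertibility of $L_2^\e - z$ on both $\h$ and $\h_m$ for $\mathrm{Re}\,z > -\Lambda_m$, with $\Lambda_m>0$. The mechanism is that on the support of $1-\chi_\e$, i.e. for $|x|^2+|\xi|^2 \ge R^2$, the ``bad'' zeroth-order term in $\mathrm{Re}\langle Lw,w\rangle_{\h_m}$ — namely $\tfrac12|w|^2(\Delta A^m - m A^{m-1}|\nabla A|^2 + 2d/\nu_m)$ computed in the proof of Lemma \ref{lemsemi-m} — becomes strictly negative. Indeed, by assertion (b) of Lemma \ref{techlemma} one has $m A^{m-1}|\nabla A|^2 \ge 4d(1+A^m)$ for $|x|^2+|\xi|^2\ge 12$, which dominates $2d/\nu_m \le 2d$, and by assertions (c), (d) with $R$ large (i.e. $\e$ small) one has $\Delta A^m \le \tfrac12 m A^{m-1}|\nabla A|^2$ there; combining, the bracket is $\le -\tfrac14 m A^{m-1}|\nabla A|^2 \le -\tfrac{a_1}{4}(1+A^m)$ on $\mathrm{supp}(1-\chi_\e)$. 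Therefore, writing $\mathrm{Re}\langle L_2^\e w, w\rangle_{\h_m} = \mathrm{Re}\langle (1-\chi_\e) Lw, w\rangle_{\h_m}$ and integrating by parts, one obtains
\begin{equation}\label{Lambda_m}
\mathrm{Re}\,\langle L_2^\e w, w\rangle_{\h_m} \le -\Lambda_m \|w\|_{\h_m}^2 - c_m \|\nabla w\|_{\h_m}^2 + (\text{terms supported where }\nabla\chi_\e\ne 0),
\end{equation}
for some $\Lambda_m>0$, $c_m>0$. The commutator terms involving $\nabla\chi_\e$ are supported in the compact annulus $\{R^2 \le |x|^2+|\xi|^2\le 4R^2\}$, where the weight is bounded, so they contribute a lower-order perturbation of the form $C(\e)(\|w\|_{L^2}\|\nabla w\|_{L^2})$ on that annulus, absorbed by a fraction of the $-c_m\|\nabla w\|^2$ term plus a further negative constant times $\|w\|^2$; shrinking $\Lambda_m$ slightly if necessary preserves strict dissipativity. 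This gives $\mathrm{Re}\langle (L_2^\e - z)w, w\rangle_{\h_m} \le -(\Lambda_m + \mathrm{Re}\,z)\|w\|^2 < 0$ for $\mathrm{Re}\,z > -\Lambda_m$, hence injectivity and closed range with the resolvent bound $\|(L_2^\e - z)^{-1}\|_{\mathscr B(\h_m)} \le 1/(\Lambda_m + \mathrm{Re}\,z)$; the same computation with $m=0$ (weight $\equiv 1$) — or rather noting the estimate degenerates gracefully — gives the corresponding statement on $\h$. Surjectivity follows by the Lax–Milgram / Lumer–Phillips argument: the adjoint $(L_2^\e)^*$ satisfies an analogous dissipativity estimate (its coefficients are again of the type in Lemma \ref{quadlemma} after the cutoff, so it has a unique maximal realization), so $L_2^\e - z$ is maximal dissipative up to the shift $\Lambda_m$, hence invertible.

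The main obstacle, and the step requiring the most care, is the treatment of the cutoff commutator terms in \eqref{Lambda_m} and the verification that $L_2^\e - z$ is genuinely \emph{closed} and has the asserted domain on both spaces simultaneously — one must check that the natural domain of $L_2^\e$ (coming from $L$) is preserved and that no issue arises from the fact that $1-\chi_\e$ vanishes on a ball (so $L_2^\e$ is not elliptic everywhere, only outside a compact set). This is handled by observing that ellipticity is only needed \emph{at infinity} for the resolvent construction, while inside the ball the operator $L_2^\e - z = -z + (\text{lower order})$ is a bounded perturbation of $-z\,\mathrm{Id}$, and globally one still gets the a priori estimate $\|w\|_{\h_m^1} \le C\|(L_2^\e - z)w\|_{\h_m}$ from \eqref{Lambda_m} after absorbing the annulus terms, which both closes the operator and yields the $\h_m\to\h_m^1$ regularizing property feeding into item (3). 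The choice of $\e$ (equivalently $R$) is dictated exactly by the two requirements $R\ge\sqrt{12}$ for Lemma \ref{techlemma}(b) and $R$ large enough that Lemma \ref{techlemma}(d) forces $\Delta A^m \le \tfrac12 m A^{m-1}|\nabla A|^2$ on $\mathrm{supp}(1-\chi_\e)$; this is where the constraint $m\ge Kd$ enters, since it is precisely what makes the coefficient in (b) exceed the contribution of the $2d/\nu_m$ term uniformly.
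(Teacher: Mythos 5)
Your general mechanism --- cut off at scale $1/\e$ and exploit that the zeroth-order terms in $\mathrm{Re}\,\langle Lw,w\rangle_{\h_m}$ become strictly negative outside a large ball by Lemma \ref{techlemma} --- is the right one, but you have applied the cutoff to the wrong piece of the operator, and the resulting decomposition does not satisfy the proposition. Two things break. First, with $L_2^\e w=(1-\chi_\e)Lw$ the operator $L_2^\e$ vanishes identically on the ball where $\chi_\e\equiv1$; every $w$ supported there lies in its kernel, so $L_2^\e-z$ is not invertible at $z=0\in\Omega$, and your claimed coercivity $\mathrm{Re}\,\langle L_2^\e w,w\rangle_{\h_m}\le-\Lambda_m\|w\|_{\h_m}^2$ is false for such $w$ (your fallback that ``inside the ball $L_2^\e-z$ is a perturbation of $-z\,\mathrm{Id}$'' only rescues $z\ne0$ and destroys the uniform resolvent bound as $z\to0$, which is needed later). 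Second, your $L_1^\e=\chi_\e L$ contains $\chi_\e\Delta w$ and therefore loses \emph{two} derivatives, not one: it cannot be bounded from $\h_m^1$ to $\h$, which is precisely the property of item (1) that is consumed in \eqref{estim_line}. And since your $L_2^\e$ is degenerate (zero) inside the ball, $(L_2^\e-z)^{-1}$ provides no regularity gain exactly where $L_1^\e$ is supported, so item (3) fails as well: on functions supported in the ball, $L_1^\e(L_2^\e-z)^{-1}g=-z^{-1}\chi_\e Lg$ plainly requires two derivatives of $g$.

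The paper's decomposition avoids both problems by keeping the full second-order part \emph{globally} in $L_2^\e$ and moving only a compactly supported \emph{first-order} piece into $L_1^\e$: it sets $L_1^\e w=(d\,w-\nu_m\nabla\nu_m^{-1}\cdot\nabla w)\,\chi_\e$ and collects the globally elliptic term $\nu_m\,\diver(\nu_m^{-1}\nabla w)$ together with $\nabla w\cdot\nabla A+d\,w+\diver(wF)$ and the complementary $(1-\chi_\e)$ portion into $L_2^\e$. Then $L_1^\e$ is first order with coefficients supported in a fixed ball (where $\mu^{-1}$ and $\nu_m^{-1}$ are comparable), hence bounded $\h_m^1\to\h$ and $\h_m\to\h^{-1}$; and $L_2^\e$ is uniformly elliptic on all of $\R^{2d}$, so one obtains the full coercivity $-\mathrm{Re}\,\langle L_2^\e w,w\rangle_{\h_m}\ge\Lambda_m\|w\|_{\h_m^1}^2$, the region near the origin --- where the favorable sign of the zeroth-order terms is unavailable --- being controlled by the gradient term through a Sobolev inequality. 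If you insist on cutting off the whole operator, you must add back a globally coercive operator on the ball to $L_2^\e$ and subtract it from $L_1^\e$, which essentially reproduces the paper's choice.
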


The proof is lengthy and rather technical and therefore deferred to Appendix A.

\begin{remark} Note that in Proposition \ref{prop_assum_AB}, $\varepsilon$ has to be chosen positive, in order to ensure assertion
(\ref{prop_assum_AB-1}).
In fact, while $L_2^\varepsilon$ continues to be coercive also for $\varepsilon =0$, the operators $L_1^\varepsilon: \h_m^1\to \h$ and $L_1^\varepsilon:\h_m\to\h^{-1} \;$ become unbounded as $\varepsilon \to 0$.
The fact that $L_1^\varepsilon$ is bounded for $\e>0$ is essential, in order to obtain the decay estimate \eqref{semigr2}, cf. the proof of Proposition \ref{gap_big_space}.
\end{remark}

Indeed, introducing the decomposition $L = L_1^\varepsilon+L_2^\varepsilon$ is one of the key ideas in \cite{GMM, Mo} in order to lift estimates for the resolvent  $R(z)=(L-z)^{-1}$ onto the larger space $\mathcal H_m$.
The general decomposition procedure introduced in \cite{GMM} applies to the  WFP equation and provides the following exponential decay of $U_t$ on $\h_m$.

\begin{proposition}\label{gap_big_space} Let $\sigma >0$ be the spectral gap of $L^{\rm s}$ in $\h$, and let $w_0\in \h_m$ with $\iint_{\R^{2d}}w_0\,\D x\D \xi=1$.
Then, for every integer $m\ge Kd$, it holds
\begin{align}\label{semigr2}
\| U_t(w_0-\mu)\|_{ {\h_m }} \le \delta_m  \E^{-\widetilde\gamma_m t} \| w_0-\mu\|_{\h_m},
\end{align}
for any $\widetilde\gamma_m \in(0,\gamma_m)$,
where
\begin{align}\label{gamma-m}
\gamma_m := \min\{\Lambda_m\,;\,\sigma\}> 0,
\end{align}
and $\delta_m=\delta_m(\widetilde\gamma_m)>1$ is given in \eqref{C}. Furthermore, we have for the resolvent set
\begin{align}\label{resolvent_m}
\varrho\big(L  \big|_{\h_m}\big) \supseteq \Omega_1:=\{ z \in \C: \text{\rm Re} \, z > - \gamma_m ,\,z\ne0 \}.
\end{align}
\end{proposition}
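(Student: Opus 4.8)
The plan is to implement, on the Wigner-Fokker-Planck model, the ``enlargement of the functional space'' method of \cite{GMM, Mo}: combine the spectral gap of $L$ on the small Gaussian-weighted space $\h$ (Proposition~\ref{prop}) with the splitting $L=L_1^\varepsilon+L_2^\varepsilon$ of Proposition~\ref{prop_assum_AB} to lift a resolvent estimate for $L$ onto the polynomially weighted space $\h_m$, uniformly along a vertical line in $\C$, and then turn this into the exponential decay \eqref{semigr2} by the quantitative Gearhart-Pr\"uss theorem (Theorem~V.1.11 in \cite{EN}). Fix $m\ge Kd$ and the parameter $\varepsilon\in(0,1)$ furnished by Proposition~\ref{prop_assum_AB}. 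The starting point is the algebraic identity (valid a priori on $\h$ for $z\ne0$ with $\mathrm{Re}\,z>-\sigma$ and $\mathrm{Re}\,z>-\Lambda_m$)
\begin{equation}\label{eq:lift-id}
(L-z)^{-1} = (L_2^\varepsilon-z)^{-1} - (L-z)^{-1}\, L_1^\varepsilon\, (L_2^\varepsilon-z)^{-1},
\end{equation}
which follows from $L-z=(L_2^\varepsilon-z)+L_1^\varepsilon$. The idea is to read the right-hand side as an operator on $\h_m$: by Proposition~\ref{prop_assum_AB}(2), $(L_2^\varepsilon-z)^{-1}$ is bounded on $\h_m$ for $\mathrm{Re}\,z>-\Lambda_m$; by Proposition~\ref{prop_assum_AB}(3), $L_1^\varepsilon(L_2^\varepsilon-z)^{-1}$ maps $\h_m$ boundedly into $\h$; and for $z\ne0$, $\mathrm{Re}\,z>-\sigma$ the operator $(L-z)^{-1}$ is bounded on $\h$ with range in $\h\subset\h_m$ by \eqref{spectrum-L}--\eqref{resolvent}. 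Hence the right-hand side of \eqref{eq:lift-id} defines a bounded operator on $\h_m$ for every $z\in\Omega_1=\{\mathrm{Re}\,z>-\gamma_m,\ z\ne0\}$; a routine density argument (checking that it inverts $L-z$ on $C_0^\infty$) identifies it with the resolvent of $L$ acting on $\h_m$, which already establishes \eqref{resolvent_m}.

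The next step is to make this estimate uniform on the line $\mathrm{Re}\,z=-\widetilde\gamma_m$, for a fixed $\widetilde\gamma_m\in(0,\gamma_m)$. On the closed half-plane $\mathrm{Re}\,z\ge-\widetilde\gamma_m$ one has $\|(L_2^\varepsilon-z)^{-1}\|_{\mathscr B(\h_m)}\le(\Lambda_m-\widetilde\gamma_m)^{-1}$ from the coercivity of $L_2^\varepsilon$ in $\h_m$ proved in Appendix~A, $\|(L-z)^{-1}\|_{\mathscr B(\h^\perp)}\le(\sigma-\widetilde\gamma_m)^{-1}$ from \eqref{resolvent}, and a bound $\|L_1^\varepsilon(L_2^\varepsilon-z)^{-1}\|_{\mathscr B(\h_m,\h)}\le C'_m$ uniform on that half-plane (again a consequence of Proposition~\ref{prop_assum_AB} and Appendix~A). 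Substituting these into \eqref{eq:lift-id}, and using that $\|\cdot\|_{\h_m}\le c\,\|\cdot\|_{\h}$ (since $\nu_m^{-1}\lesssim\mu^{-1}$) together with $\|\cdot\|_{L^1}\lesssim\|\cdot\|_{\h_m}$ (valid because $m>d$, so $\iint_{\R^{2d}}(1+A^m)^{-1}\,\D x\,\D\xi<\infty$) to control the component of $L_1^\varepsilon(L_2^\varepsilon-z)^{-1}g$ along $\mu$ — whose mass is $O(z)$ as $z\to0$ and which therefore cancels the $z^{-1}$ singularity of $(L-z)^{-1}\mu$ — one obtains a finite constant $M_m$ with $\sup_{\mathrm{Re}\,z\ge-\widetilde\gamma_m}\|(L-z)^{-1}\|_{\mathscr B(\h_m^\perp)}\le M_m$ (and $0\in\varrho(L|_{\h_m^\perp})$ by Proposition~\ref{prop} and the same lifting). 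Note that $M_m$, hence the constant $\delta_m$ of \eqref{C}, deteriorates as $\widetilde\gamma_m\nearrow\gamma_m$ through the factors $(\Lambda_m-\widetilde\gamma_m)^{-1}$, $(\sigma-\widetilde\gamma_m)^{-1}$, which is exactly why the strict inequality $\widetilde\gamma_m<\gamma_m$ must be imposed.

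Finally, by Lemma~\ref{lemsemi-m} and Remark~\ref{Rem4.5}, $U_t$ restricts to a $C_0$-semigroup on the $U_t$-invariant zero-mass subspace $\h_m^\perp$ with generator $L|_{\h_m^\perp}$; hence $\E^{\widetilde\gamma_m t}\,U_t|_{\h_m^\perp}$ is the $C_0$-semigroup generated by $\widetilde\gamma_m+L|_{\h_m^\perp}$, whose resolvent is uniformly bounded on the closed right half-plane by the previous step. The quantitative Gearhart-Pr\"uss theorem then gives $\|\E^{\widetilde\gamma_m t}\,U_t|_{\h_m^\perp}\|_{\mathscr B(\h_m^\perp)}\le\delta_m$ for all $t\ge0$, with $\delta_m$ as in \eqref{C}; applying this to $w_0-\mu$, which lies in $\h_m^\perp$ since $\iint_{\R^{2d}}(w_0-\mu)\,\D x\,\D\xi=0$, proves \eqref{semigr2}.

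The place where the genuine work is hidden is the uniform control, as $|\mathrm{Im}\,z|\to\infty$, of $L_1^\varepsilon(L_2^\varepsilon-z)^{-1}:\h_m\to\h$, together with the coercivity of $L_2^\varepsilon$ in $\h_m$. This is precisely what the careful choice of the splitting $L=L_1^\varepsilon+L_2^\varepsilon$ is designed to deliver, and it rests on $L_1^\varepsilon$ being bounded from $\h_m^1$ into $\h$ and from $\h_m$ into $\h^{-1}$ — a property that fails as $\varepsilon\to0$, which is why one must keep $\varepsilon>0$. Constructing that splitting (the tuning of the cut-off parameter $\varepsilon$, and the regularization/dissipativity estimates for $L_2^\varepsilon$, which here must moreover accommodate the diffusion in $x$ absent from the models of \cite{GMM}) is the technical heart of the matter and is the content of Proposition~\ref{prop_assum_AB} and its proof in Appendix~A; granting that, the assembly above is essentially bookkeeping on resolvent identities and the Gearhart-Pr\"uss theorem.
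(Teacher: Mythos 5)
Your proposal follows essentially the same route as the paper's proof: the same factorization identity for the resolvent, the same lifting of the $\h$-resolvent bound from $\h$ to $\h_m$, removal of the $z=0$ singularity by restricting to the invariant subspace $\h_m^\perp$, and the quantitative Gearhart--Pr\"uss theorem applied to the rescaled semigroup. The only elision is that the Gearhart--Pr\"uss estimate used in the paper carries a $1/t$ factor and must be interpolated with the crude growth bound $\E^{\beta_m t}$ of Lemma \ref{lemsemi-m} for small $t$ to produce the uniform constant $\delta_m$ of \eqref{C}; this is bookkeeping, not a gap.
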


\begin{proof}
For the sake of completeness we briefly present the proof which follows the ones of Theorem~2.1, Theorem~3.1, and Theorem~4.1 in \cite{GMM}. The spirit of the proof is the following:
By using the operator factorization from Proposition \ref{prop_assum_AB}, we shall infer an estimate for the resolvent on $\h_m$.
Restricting the resolvent to $\h_m^\perp$ removes its singularity at $z=0$ and consequently yields a uniform estimate on the complex half plane $\{z \in \C :\ \text{Re}\,z\ge -\widetilde \gamma_m\}.$
The Gearhart-Pr\"uss Theorem \cite{G78,J84} then yields the exponential decay of $U_t$ on $\h_m^\perp$.
The proof now follows in several steps:

\emph{Step 1:} Following \cite{Mo06}  we define on ${\h}_m$ the operator
\begin{align}\label{inverse_operator}
R(z) := ( L_2^\varepsilon - z)^{-1} - \left( (L-z)\big|_{\h}\right)^{-1} L_1^\varepsilon  ( L_2^\varepsilon - z)^{-1},
\quad z \in \Omega_1.
\end{align}
Theorem 2.1 and Remark 2.2 in \cite{GMM} implies that $R(z)$ is the inverse operator of $(L-z)$ in ${\h}_m$ for any $z \in \Omega_1$ and therefore
statement (\ref{resolvent_m}) holds.
\\

\emph{Step 2:} The next step is devoted to obtaining uniform estimates for $R(z)=\left( (L-z)\big|_{\h_m}\right)^{-1}$, for $z\in\C$ on some appropriately defined half planes, cf. \cite[Theorem 3.1 (4)]{GMM}.
To this end, we shall first prove the following bound for the resolvent on $\h$:
\begin{align}\label{stup_estim}
\sup_{s}\| (L-(a+is))^{-1} \| _{\mathscr{B}(\h)} = K_0 < \infty,\quad \forall\,a\in (-\sigma,0).
\end{align}
Indeed, the constant $K_0=K_0(\sigma,a)$ can be explicitly obtained by considering the resolvent equation for  $\text{Re}\,z>-\sigma$ and $z\ne0$:
$$
 (L-z)f=g\quad \mbox{on } \h,
$$
Using the orthogonal decomposition $f=f^\perp+c_1\mu$, $g=g^\perp+c_2\mu$, having in mind that $L$ maps $\h^\perp$ into $\h^\perp$, we infer
$$
 f^\perp=(L-z)\big|_{\h^\perp}^{-1}\,g^\perp\,,\quad c_1=-\frac{c_2}{z}.
$$
In view of \eqref{resolvent} this yields
$$
 \|f\|_\h^2 \le \frac{1}{(\text{Re}\,z+\sigma)^2} \|g^\perp\|_\h^2 + \frac{|c_2|^2}{|z|^2}
 \le \max \left\{\frac{1}{(\text{Re}\,z+\sigma)^2}\,;\,\frac{1}{|z|^2}\right\} \|g\|_\h^2.
$$
Hence,
$$
 \|(L-z)^{-1} \|_{{\mathscr{B}(\h)}}
 \le \max \left\{\frac{1}{\text{Re}\,z+\sigma}\,;\,\frac{1}{|z|}\right\}
 \quad\mbox{for }\text{Re}\,z>-\sigma,\,z\ne0;
$$
and $K_0\le\max\left\{\frac{1}{a+\sigma}\,;\,\frac{1}{|a|}\right\}$.

From this bound on $(L-z)\big|_\h^{-1}$ we can deduce a bound on $(L-z)\big|_{\h_m}^{-1}$ for
$z\in\Omega_1\equiv \{ z \in \C: \text{\rm Re} \, z > - \gamma_m ,\,z\ne0 \}$. Using
(\ref{inverse_operator}), (\ref{coercive3}), and $L_1^\varepsilon\in\mathscr{B}(\h_m^1 \to \h)$, we infer
\begin{equation} \label{estim_line}
\begin{split}
&\!\!\!\!\!\!\!\!\!\!\!\!\!\!\!\!\|(L-z)^{-1} \|_{{\mathscr{B}(\h_m)}} \\
&\le \| (L_2^\varepsilon -z)^{-1} \|_{{\mathscr{B}(\h_m \to \h_m^1)}}
\left( 1+  \|(L-z)^{-1} \|_{{\mathscr{B}(\h)}} \,
\| L_1^\varepsilon \|_{{\mathscr{B}(\h_m^1 \to \h)}}\right) \\
&\le \max \left\{\frac{1}{\text{Re}\,z+\Lambda_m}\,;\,\frac{1}{\Lambda_m}\right\}
\left( 1+ \max \left\{\frac{1}{\text{Re}\,z+\sigma}\,;\,\frac{1}{|z|}\right\}
\| L_1^\varepsilon \|_{{\mathscr{B}(\h_m^1 \to \h)}}\right) \\
&=: \vartheta_m(z).
\end{split}
\end{equation}
Next we consider the resolvent of $L\big|_{\h_m^\perp}$.
First we note that both subspaces of $\h_m=\text{span}[\mu]\,\oplus\, \h_m^\perp$ are invariant for $(L-z)^{-1}$  (cf.\ Remark \ref{Rem4.5}). Hence $(L-z)\big|_{\h_m}^{-1}$ and $(L-z)\big|_{\h_m^\perp}^{-1}$ coincide on $\h_m^\perp$.
Since $z=0$ is an isolated and non-degenerate eigenvalue of $L\big|_{\h_m}$, we conclude $\sigma\big(L\big|_{\h_m^\perp}\big) = \sigma\big(L\big|_{\h_m}\big)\setminus \{0\}$.
Since $L$ generates a $C_0$-semigroup on $\h_m^\perp$, it is closed and its resolvent is analytic on
$$\varrho\big(L  \big|_{\h_m^\perp}\big) \supseteq \{ z \in \C: \text{\rm Re} \, z > - \gamma_m \}.$$
For any fixed $\widetilde \gamma_m \in (0,\gamma_m)$ we
henceforth conclude from \eqref{estim_line} that the resolvent of $L \big|_{\h_m^\perp}$ is  uniformly bounded (on a whole right half space)
\begin{equation} \label{resolvent-uniform}
 \left\| (L-z)^{-1} \right\|_{\mathscr B(\h_m^\perp)} \le M(\widetilde \gamma_m)<\infty,
 \quad \text{\rm Re} \, z \ge -\widetilde \gamma_m\,.
\end{equation}
Note, however, that the constant $M(\widetilde \gamma_m)$ is not known explicitly.\\

\emph{Step 3:}
Next we shall show that this resolvent estimate yields an exponential decay estimate for the semigroup $U_t$ on $\h_m^\perp$. In order to do so, we will
apply the Gearhart-Pr\"uss-Theorem to the rescaled semigroup $\E^{\widetilde \gamma_m t}U_t$, cf.\ Theorem V.1.11 in \cite{EN} (see also \cite[Theorem 3.1]{GMM} and \cite{HeSj}). This
is possible in view of
the uniform bound \eqref{resolvent-uniform} and yields the following estimate for $U_t$:
\begin{align}\label{U-decay}
\| U_t\|_{\mathscr B(\h_m^\perp)} \le \frac{(1+M\omega)^2 C_L^2}{2\pi t} \,\E^{ - \widetilde\gamma_m t },
 \quad t>0,
\end{align}
where $\omega>\beta_m+\widetilde\gamma_m+1$,
$C_L\le\pi (\omega-\beta_m-\widetilde\gamma_m)^{-1}$, and
\begin{equation}\label{M}
 M:=\sup_{s\in\R}\| (L-(-\widetilde\gamma_m+is))^{-1} \| _{\mathscr{B}(\h_m^\perp)}
 \le\vartheta_m(- \widetilde\gamma_m),
\end{equation}
where the second inequality follows directly from \eqref{estim_line} and the definition of $M$.
Note that \eqref{M} asserts a bound on the resolvent along the (fixed) line $$\{z\in \C:\ z=-\widetilde\gamma_m+is\},$$ in contrast to \eqref{resolvent-uniform}.
Keeping this in mind, we conclude that the estimates in the proof of Theorem V.1.11 in \cite{EN} in fact only depend on the resolvent evaluated at $ z=-\widetilde\gamma_m+is$.
Interpolating \eqref{U-decay} with (\ref{boundsemi}), we consequently conclude
$$
\| U_t  \|_{\mathscr B(\h_m^\perp)} \le \delta_m \E^{ - \widetilde\gamma_m t },
$$
where (after optimizing in $\omega>\beta_m+\widetilde\gamma_m+1$)
\begin{align}\label{C}
\delta_m := \max \left\{ \frac{\pi}{2}
\vartheta_m(- \widetilde\gamma_m)^2
 \, ; \, \E^{\beta_m +  \widetilde\gamma_m} \right\}>1.
\end{align}
This finishes the proof.

\end{proof}

Proposition \ref{gap_big_space} implies that the operator $L$ is
invertible in the space $\h_m^\perp$, for $m\ge Kd$. More precisely, invoking classical arguments (cf.\ Theorem 1.5.3 in \cite{Pa}), we infer
\begin{align}\label{sigmatilde}
 \forall \, m \ge Kd: \quad {\| L^{-1}\, \|}_{\mathscr{B}(\h_m^\perp)} \le
 \frac{\delta_m}{\widetilde \gamma_m}
 =: \frac{1}{\sigma_m}.
\end{align}
Note that the constant ${\sigma_m}>0$ depends
on $m$ and thus on $d$ and $A$. The reason why we obtain exponential decay of $U_t$ on $\h_m$ with a rate $\widetilde \gamma_m < \gamma_m$
can be understood from the fact that in order to apply the Gearhart-Pr\"uss Theorem one needs to guarantee
a uniform resolvent estimates on some complex half plane, which is not
sharp, in contrast to e.g. the Hille-Yoshida theorem (used on $\h$).


\section{Boundedness of the perturbation} \label{sbound}

In this section we shall prove the boundedness of the operator $\Theta [V_0]$ in $\h_m$ which is the key technical result for our perturbation analysis. We recall that our potential $V$ from \eqref{Potential} consists of the harmonic potential plus the perturbation $\lambda V_0$. Hence, we shall now consider $-\lambda \Theta[V_0]$ as a perturbation of $L$.

\begin{proposition} \label{lempert}
Let $m\in \N$ and the potential $V_0\in C^m_b(\R^d)$.
Then the operator $\Theta[V_0]$ maps $\h_m$ into ${\h}_m^\perp$ and
\[
\left\| \Theta[V_0] \right\|_{\mathscr B(\h_m )} \leq \Gamma_m:=C_m \, \max_{
 |j|\le m } \, \left\| \partial_{x}^j V_0 \right\|_{L^\infty(\R^d)},
\]
where $C_m>0$ denotes some positive constant, depending only on $m$ and $d$.
\end{proposition}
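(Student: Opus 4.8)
The plan is to work on the Fourier side in the $\xi$-variable, where the non-local operator $\Theta[V_0]$ becomes a multiplication-type (shift) operator, and then to transfer the resulting bound back to the weighted $L^2$-space $\h_m$. First I would introduce $u(x,\eta):=\int_{\R^d} f(x,\xi)\,\E^{-\I\xi\cdot\eta}\,\D\xi$, the partial Fourier transform of $f\in\h_m$ in the momentum variable, so that by the definition \eqref{theta} one computes
\[
\widehat{\Theta[V_0]f}(x,\eta) = -\I\,\delta V_0(x,\eta)\,u(x,\eta),
\]
with $\delta V_0(x,\eta)=V_0(x+\tfrac\eta2)-V_0(x-\tfrac\eta2)$. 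By Plancherel in $\eta$, establishing boundedness in $\h_m$ is equivalent to establishing, for the transformed functions, a bound of the form $\|\delta V_0(x,\eta)\,u\|_{\widetilde\h_m}\le \Gamma_m\|u\|_{\widetilde\h_m}$, where $\widetilde\h_m$ carries the weight $\nu_m^{-1}$ expressed through $x,\eta$ after the transform; here one must be careful that the weight $\nu_m^{-1}=1+A^m(x,\xi)$ is a polynomial in $\xi$, so Fourier transform turns powers of $\xi$ into derivatives in $\eta$. This is the only genuinely delicate point and I address it below.

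The zero-mass property — that $\Theta[V_0]$ maps $\h_m$ into $\h_m^\perp$ rather than merely into $\h_m$ — I would check directly and first, since it is short: by \eqref{null} it suffices to show $\iint \Theta[V_0]f\,\D x\,\D\xi=0$. Integrating \eqref{theta} in $\xi$ produces a factor $\int \E^{\I\eta\cdot(\xi-\xi')}\,\D\xi = (2\pi)^d\delta(\eta)$, which forces $\eta=0$ and hence $\delta V_0(x,0)=V_0(x)-V_0(x)=0$; so the integral vanishes. (Equivalently, $\Theta[V_0]$ is, modulo constants, a derivative-type operator in the sense that it integrates to zero.) This uses only the explicit antisymmetric structure \eqref{delta} of the symbol.

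Next I would carry out the main estimate. The weight satisfies $\nu_m^{-1}(x,\xi)=1+A^m(x,\xi)$ with $A$ a quadratic form in $(x,\xi)$, so $\nu_m^{-1}$ is a polynomial of degree $2m$ in $(x,\xi)$, and in particular of degree $\le 2m$ in $\xi$ with $x$-dependent (polynomially bounded) coefficients. Writing $\|f\|_{\h_m}^2=\iint (1+A^m)|f|^2$ and expressing each monomial $\xi^\alpha$ ($|\alpha|\le 2m$) via the inverse Fourier transform as $\I^{|\alpha|}\partial_\eta^\alpha$ acting on $u$, the norm $\|f\|_{\h_m}$ is comparable to a sum over $|\beta|\le 2m$ of weighted $L^2_{x,\eta}$-norms of $\partial_\eta^\beta u$ with polynomial-in-$x$ weights (one should actually fix this correspondence cleanly by a change of variables or by comparing $\h_m$ with a standard anisotropic weighted Sobolev space in $\eta$ — this is where the constant $C_m$ depending on $m$ and $d$ enters). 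Then $\Theta[V_0]f$ corresponds to $\delta V_0(x,\eta)\,u(x,\eta)$, and one must differentiate the product $\delta V_0 \cdot u$ up to order $2m$ in $\eta$ by the Leibniz rule. The key observation is that $\partial_\eta^\gamma \delta V_0(x,\eta) = 2^{-|\gamma|}\big((\partial^\gamma V_0)(x+\tfrac\eta2) \mp (\partial^\gamma V_0)(x-\tfrac\eta2)\big)$ is bounded in $L^\infty_{x,\eta}$ by $2\cdot 2^{-|\gamma|}\|\partial^\gamma V_0\|_{L^\infty}$ for $|\gamma|\ge 1$, while the $|\gamma|=0$ term $\delta V_0$ itself is bounded by $|\eta|\,\|\nabla V_0\|_{L^\infty}$ — and this extra factor $|\eta|$ is exactly what is needed to absorb the loss of one weight power when no $\eta$-derivative falls on $u$. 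Hence every term in the Leibniz expansion is either a bounded multiple of a lower- or equal-order $\partial_\eta^\delta u$ with a weight that is controlled by the $\h_m$-weight, or carries a compensating factor of $|\eta|$; collecting these and using $\|\partial^j V_0\|_{L^\infty}\le \max_{|j|\le m}\|\partial^j_x V_0\|_{L^\infty}$ gives the claimed bound with $C_m$ depending only on $m$ and $d$ (through the number and binomial coefficients of the Leibniz terms and the comparison constants of the weights).

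The main obstacle is bookkeeping in this last step: one must set up the equivalence between the weighted norm $\|\cdot\|_{\h_m}$ (polynomial weight in $(x,\xi)$) and an anisotropic weighted Sobolev norm in $(x,\eta)$ precisely enough that the Leibniz terms with "one fewer $\eta$-derivative on $u$" are genuinely compensated by the factor $|\eta|$ in $\delta V_0$ — i.e.\ that no net weight is lost. I expect this to require tracking that $A(x,\xi)$ is not merely bounded by $|x|^2+|\xi|^2$ but is a nondegenerate quadratic form (cf.\ \eqref{conv_A}), so that powers of $|\eta|$ genuinely correspond to powers of the weight after transform, and being somewhat careful near $\eta=0$ where $|\eta|$ vanishes (but there the compensation is not needed since the weight $1+A^m$ is bounded below by $1$). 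Everything else — the mapping into $\h_m^\perp$, and the structural identity for $\widehat{\Theta[V_0]f}$ — is straightforward.
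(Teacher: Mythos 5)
Your overall strategy --- partial Fourier transform in $\xi$, so that $\Theta[V_0]$ becomes multiplication by $-\I\,\delta V_0(x,\eta)$, followed by Plancherel and the Leibniz rule --- is exactly the paper's, and your short argument that $\Theta[V_0]$ maps into $\h_m^\perp$ (via $\delta V_0(x,0)=0$) is correct. However, the central bookkeeping step contains a genuine error. You assert that, since the weight is a polynomial of degree $2m$ in $\xi$, one must differentiate the product $\delta V_0\cdot u$ up to order $2m$ in $\eta$. Carried out literally, the Leibniz expansion would then involve $\partial_\eta^\gamma\delta V_0$ with $|\gamma|$ up to $2m$, i.e.\ derivatives of $V_0$ up to order $2m$; this neither matches the hypothesis $V_0\in C^m_b$ nor the constant $\Gamma_m$, and your final appeal to $\max_{|j|\le m}\|\partial_x^j V_0\|_{L^\infty}$ is inconsistent with your own setup. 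The correct count is $m$: one first replaces $1+A^m$ by the equivalent weight $1+(|x|^2+|\xi|^2)^m$ (this also disposes of the odd cross-monomials of $A^m$ that you flag but do not resolve), expands into even monomials $|x|^{2(m-j)}\xi^{2n}$ with $|n|=j\le m$, and uses $\xi^{2n}|g|^2=|\xi^n g|^2$ so that Plancherel yields $\|\xi^n\,\Theta[V_0]f\|_{L^2}^2=(2\pi)^{-d}\|\partial_\eta^n(\delta V_0\,\hat f)\|_{L^2}^2$ with only $|n|\le m$ derivatives. Because the weight sits on $|f|^2$, a monomial of degree $2j$ in $\xi$ costs only $j$ derivatives in $\eta$, not $2j$.

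The second problem is the ``compensation by $|\eta|$'' mechanism on which you rely for the Leibniz terms where derivatives fall on $\delta V_0$. After the partial Fourier transform, multiplication by $\eta$ corresponds to a $\xi$-\emph{derivative} of $f$, not to a power of $\xi$; so the factor $|\eta|$ in $|\delta V_0(x,\eta)|\le|\eta|\,\|\nabla V_0\|_{L^\infty}$ cannot restore a lost $\xi$-weight, and the claim that ``powers of $|\eta|$ genuinely correspond to powers of the weight after transform'' conflates the two dual roles. Fortunately no compensation is needed at all: in the term $\partial_\eta^k(\delta V_0)\,\partial_\eta^{n-k}\hat f$ one simply bounds $\partial_\eta^k(\delta V_0)$ in $L^\infty$ by $2^{1-|k|}\|\partial_x^k V_0\|_{L^\infty}$ and observes that the resulting quantity $\|\,|x|^{m-j}\xi^{n-k}f\|_{L^2}$ with $|n-k|\le j$ is directly dominated by $\|f\|_{\h_m}$, since $|x|^{2(m-j)}|\xi|^{2|n-k|}\lesssim 1+(|x|^2+|\xi|^2)^m$. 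With these two corrections your argument closes and coincides with the paper's proof.
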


\begin{proof}
 We first note that, in view of \eqref{A-bound}, the norm $\|f\|^2_{\h_m}$ and the norm
$$
\| f \|_{m}^2:= \quad \iint_{\R^{2d}} |f|^2(x,\xi) \, \left( 1 + \left(|x|^2 +
|\xi|^2\right)^m \right) \; \D x \, \D \xi
$$
are equivalent.
It is therefore enough to prove that
\begin{align}\label{est}
 \iint_{\R^{2d}} |\Theta [V_0]w |^2 \, \left(1 +
   \left(|x|^2 + |\xi|^2\right)^m \right)\; \D
 x \, \D \xi \le C_0 \, \| w \|_{m}^2:
\end{align}
for some $C_0\ge 0$. In the following we denote by
\begin{equation}\label{partfourier}
 (\mathcal{F}_{\xi \to \eta} w)(x,\eta)\equiv {\widehat w} (x, \eta) := \int_{\R^d} w (x, \xi) \, \E^{- \I \xi \cdot \eta} \D \xi,
\end{equation}
the partial Fourier transform with respect to the variable $\xi \in \R^d$ only. Recall from \eqref{theta}
that the operator $\Theta[V_0]$ acts via
\begin{equation}\label{action}
 \Theta[V_0]w= -\I \mathcal{F}_{\eta \to \xi}^{-1} \big( \delta
   V_0(x,\eta) \cdot \mathcal{F}_{\xi \to \eta} w(x,\eta) \big).
\end{equation}
Using Plancherel's formula and H\"older's inequality, this implies
$$
\left\| \Theta[V_0]w \right\|_{L^2} \le 2\left\| V_0
\right\|_{L^\infty} \, \| w \|_{L^2}.
$$
Thus, in order to prove \eqref{est}, we only need to estimate
$$
\iint_{\R^{2d}} | \Theta [V_0]w(x,\xi) |^2 \, \left(|x|^2
  + |\xi|^2\right)^m \; \D x \, \D \xi .
$$
We rewrite this term, using
$$
\left(|x|^2 + |\xi|^2\right)^m = \sum_{j=0}^m \binom{m}{j} \,
|x|^{2(m-j)} \, |\xi|^{2j},
$$
in the following form:
\begin{align*}
 & \iint_{\R^{2d}} |\Theta [V_0]w(x,\xi)|^2 \, \left(|x|^2
   + |\xi|^2 \right)^m\; \D x \, \D \xi  \\
   & = \sum_{j=0}^m \binom{m}{j} \, \iint_{\R^{2d}} |x|^{2(m-j)} \,
   |\xi|^{2j} \, |\Theta [V_0]w(x,\xi)|^2\; \D x \, \D \xi.
\end{align*}
It holds
$$
|\xi|^{2j} = \sum_{|n |=j} c_{n,j} \, 
 \xi^{2 n_1} \cdot\cdot\cdot \xi^{2 n_d},
$$
where $c_{n,j}$ are some coefficients depending only on $n
\in \N^d$. Therefore
\begin{align*}
 &  \iint_{\R^{2d}} | \Theta [V_0]w(x,\xi)|^2 \,
 \left(|x|^2 + |\xi|^2 \right)^m\; \D x \, \D \xi  \\
 & = \ \sum_{j=0}^m \binom{m}{j}  \left( \sum_{|n|=j}
 c_{n,j} \, \int_{\R^{2d}} |x|^{2(m-j)} \, \xi^{2n} \, |\Theta
 [V_0]w(x,\xi)|^2\; \D x \, \D \xi \right),
\end{align*}
where we denote $\xi^{2n}:=\xi^{2 n_1} \cdot\cdot\cdot \xi^{2 n_d}$. From \eqref{action} we see that
\begin{align}\label{es2}
 \left\| \xi^{n} \, \left(\Theta [V_0]w \right)
 \right\|^2_{L^2(\R^{2d})} = (2\pi)^{-d}\left\| \partial_\eta^n \left(\delta
     V_0 \widehat{w}\right) \right\|^2_{L^2(\R^{2d})},
\end{align}
with $\widehat w(x,\eta)$ defined by \eqref{partfourier}. We expand
the right hand side of this identity by using the Leibniz formula (see also
\cite{MaBa}), and we apply
$$
\sup_{x,\eta \in \R^d} \left| \partial_{\eta_k}^j \left(\delta V_0
 \right) (x,\eta) \right| \le 2^{(1-j)} \, \sup_{y \in \R^d}
\left|\partial_{y_k}^j \, V_0(y) \right|,
$$
(cf.\ Definition \eqref{delta}). Then we can estimate (\ref{es2}) as follows:
\begin{align*}
\iint_{\R^{2d}} \xi^{2 n}  |\Theta [V_0]w (x,\xi)|^2 \; \D x\, \D \xi  \le C(n)  \max_{ |k|\le |n| }  \| \partial_{x}^j V_0 \|_{L^\infty(\R^d)}^2  \| \xi^{n-k}  w \|^2_{ L^2(\R^{2d})} ,
\end{align*}
where $C(n )>0$ depends only on binomial coefficients.  In summary, we obtain
\begin{align*}
&  \iint_{\R^{2d}}| \Theta [V_0]w(x,\xi)|^2 (|x|^2 + |\xi|^2)^m\; \D x \, \D \xi  \\
& \le \  \tilde C_m \sum_{j=0}^m \binom{m}{j}
  \Big(   \sum_{|n |=j}  c_{n,j} \max_{ |k|\le |n|} \| \partial_{x}^k V_0 \|_{L^\infty(\R^d)}^2
  \left\| | x|^{m-j} \xi^{n-k}  w\right\|^2_{L^2(\R^{2d})}\Big) \\
 & \le C_m^2\max_{ |j|\le m} \| \partial_{x}^j V_0\|_{L^\infty(\R^d)}^2    \iint_{\R^{2d}}  |w|^2(x,\xi) ( |x|^2 + |\xi|^2)^m \; \D x \, \D \xi,
 \end{align*}
with $\tilde C_m,\,C_m>0$ depending only on binomial coefficients. Thus, the assertion is proved.
\end{proof}

\begin{remark}\label{unbounded} The unboundedness of  $\Theta[V_0]$  in $\h$ (the exponentially weighted Hilbert space)
is due to its non-locality, which can be seen from the following reformulation of \eqref{theta} (cf.\ \S3 of \cite{ALMS}):
$$
 (\Theta [V] f)(x,\xi) =-2\,\pi^{-d}\,W(x,\xi)*_\xi f(x,\xi),
$$
with
$$
 W(x,k):=\mbox{ Im }\big[e^{2\I x\cdot k}\hat V(2k)\big].
$$
To illustrate the situation let us take $V_0(x)=\sin(x\cdot k_0)$ for some $k_0\in\R^d\setminus \{0\}$. This implies
$$
 (\Theta [V] f)(x,\xi) = 2^d \cos(2x\cdot k_0)\,[f(x,\xi-k_0)-f(x,\xi+k_0)].
$$
The problem is that such a shift operator cannot be bounded in an $L^2$-space with an inverse Gaussian weight, as the following computation shows:
$$
 \int_{\R^d} |f(\xi-k_0)|^2\,e^{|\xi|^2}\,\D\xi=
 e^{|k_0|^2}\int_{\R^d} |f(\xi)|^2\,e^{|\xi|^2}\,e^{2k_0\cdot \xi}\,\D\xi,\quad
 f\in L^2(\R^d,e^{|\xi|^2}\,\D\xi).
$$
\end{remark}


\section{Proof of the main theorems} \label{sproof}

The results of the preceding sections allow us to give the proofs of  Theorem \ref{th1} and Theorem \ref{th2}.

\begin{proof}[Proof of Theorem \ref{th1}] We start with Assertion (i): Let $m$ be the integer fixed in the assertion.
 Any solution $w_\infty\in \h_m$ of \eqref{qfpS1} that is subject to
 the normalization $\iint w_\infty\,\D x\D \xi=1$, satisfies
 the unique decomposition $w_\infty=\mu+w_*$ with $w_*\in
 \h_m^\perp$, i.e. $\iint w_* \D x \D \xi = 0$. Therefore, we consider the following fixed point
 iteration for $w_*$:
\begin{align*}
 T: \h_m^\perp \rightarrow \h_m^\perp , \quad w_{n-1} \mapsto
 T(w_{n-1})\equiv w_n,
\end{align*}
where $w_n\in \h_m^\perp$ solves
\begin{align*}
L w_n = \lambda \, \Theta\left[V_0\right]\left(w_{n-1} + \mu\right).
\end{align*}
To be able to apply Banach's fixed point theorem, we have to prove
that the mapping $T$ is a contraction on $\h_m^\perp$. To this end we
write, for any $w_{n-1}, \widetilde w_{n-1} \in \h_m^\perp$,
\begin{align*} {\| w_n - \widetilde w_n \|}_{\h_m^\perp} = \, \left\| \,
   \lambda \, L^{-1} \, \Theta\left[V_0\right]\left(w_{n-1} -
     \widetilde w_{n-1} \right) \,
   \right\|_{\h_m^\perp }
\end{align*}
and estimate
\begin{align*} {\left\| w_n - \widetilde w_n \right\|}_{\h_m^\perp} \leq \,
 |\lambda| \, {\left\| \, L^{-1} \, \right\|}_{\mathscr B(\h_m^\perp)} \, {\left\|
   \Theta[V_0] (w_{n-1} - \widetilde w_{n-1}) \, \right\|}_{\h_m^\perp }\,
 .
\end{align*}
{}From \eqref{sigmatilde} and Proposition \ref{lempert} we obtain
\begin{align*} {\| w_n - \widetilde w_n \|}_{\h_m^\perp} \leq \,
 \frac{ \Gamma_m |\lambda|}{\sigma_m} \, \left\| w_{n-1} - \widetilde
   w_{n-1} \right\|_{\h_m^\perp},
\end{align*}
since the potential $V_0$ satisfies \eqref{condV}. Since
$|\lambda| < \sigma_m / \Gamma_m$, there exists a unique fixed point
$w_*= T(w_*)\in \h_m^\perp$.  Thus, the unique (stationary) solution
of \eqref{qfpS1} is obtained as $w_\infty = \mu + w_* \in \h_m$. Note, however, that $\mu \not \perp w_*$ in the sense of $\h_m$.

The obtained solution $w_\infty$ is real valued, since $T$ maps real valued functions to real valued functions. Moreover, $w_\infty\in\h_m$ satisfies
\eqref{qfpS1}, at least in the distributional sense. Furthermore,
$\Theta[V_0]w_\infty \in\h_m$ and $Lw_\infty \in H^{-2}(\R^{2d})$ and
thus \eqref{qfpS1} also holds in $H^{-2}(\R^{2d})$.  To explore --
{\it a posteriori} -- the regularity of $w_\infty$, we rewrite \eqref{qfpS1}
in the following weak form
$$
 \iint_{\R^{2d}}(\nabla_x w_\infty \cdot \nabla_x\varphi + \nabla_\xi w_\infty \cdot \nabla_\xi\varphi +w_\infty
 \varphi) \, \D x\,\D\xi\,=\,{}_{H^{-1}}\langle F( w_\infty) ,\varphi \rangle_{H^1},
$$
for any $\varphi\in H^1(\R^{2d})$, where
$$
F (w_\infty):= w_\infty- \text{div}_x(\xi w_\infty)+\text{div}_\xi(x w_\infty +2\xi w_\infty )-\lambda\Theta[V_0]w_\infty .$$
Clearly $F(w_\infty) \in H^{-1}(\R^{2d})$ and thus $w_\infty \in H^{1}(\R^{2d})$ follows. Moreover, since $F(w_\infty )\in L^2_{loc}(\R^{2d})$,
we also have $w_\infty \in H^{2}_{\rm loc}(\R^{2d})$.\\


For the proof of Assertion (ii), we first note that $\Theta[V_0]$ is
a bounded perturbation of $L$ on $\h_m$ and thus \eqref{qfp} admits a unique mild solution $w \in C([0,\infty), \h_m)$.
Since $\Theta[V_0]$ maps $\h_m$ into $\h_m^\perp$, we also know that along this solution the mass is conserved, i.e. $\iint w(t) \, \D x \D \xi = 1$, for all $t \geq 0$.

Next, consider the new unknown $g(t):= w(t) - w_\infty$ with $g_0=w_0-w_\infty$. Due to mass conservation $g(t) \in {\h}_m^\perp$ for all $t \geq 0$, and we also have
$$
\partial_t g = L g - \lambda \Theta[V_0]  g,
$$
since $w_\infty$ is a stationary solution of \eqref{qfp}.
Taking into account that the semigroup $U_t$ associated with $L$ in the space $\h_m$ satisfies (\ref{semigr2}), it holds:
\begin{align*}
\| g(t) \|_{\h_m} & \le \delta_m \E^{-\widetilde\gamma_m t} \| g_0\|_{\h_m} + \delta_m |\lambda| \int_{0}^t \E^{-\widetilde\gamma_m(t-s)} \| \Theta[V_0]g(s)\|_{\h_m}\; \D s \\
& \le \delta_m \E^{-\widetilde\gamma_m t} \| g_0\|_{\h_m} + \delta_m |\lambda| \Gamma_m \int_{0}^t \E^{-\widetilde\gamma_m(t-s)} \| g(s) \|_{\h_m}\; \D s,
\end{align*}
for any $\widetilde\gamma_m\in(\delta_m |\lambda| \Gamma_m, \gamma_m)$
and with $\Gamma_m$ defined in Proposition \ref{lempert}.
Gronwall's lemma then implies
\begin{align*}
  \| g(t) \|_{\h_m} \le \delta_m \E^{-t (\widetilde\gamma_m-\delta_m|\lambda| \Gamma_m) } \|
  g_0\|_{\h_m}.\\
\end{align*}

It remains to prove Assertion (iii):
As before we write $w_\infty = \mu + w_*$, where $w_*\in\h_m^\perp$ solves
$$
\left(L - \lambda \Theta[V_0]\right)w_* = \lambda \Theta[V_0] \mu.
$$
Since ${\|\Theta[V_0]  \|}_{\mathscr B(\h_m)} \leq \Gamma_m$, we obtain ${\|\lambda \Theta[V_0] \mu \|}_{\h_m} \leq  |\lambda| \Gamma_m \|\mu\|_{\h_m}$. Next, we consider $L$ on $\h_m^\perp$.
{}From the proof of Proposition \ref{gap_big_space} 
we conclude for its resolvent set:
$$
\varrho\left(L\big|_{{\h}_m^\perp}\right)\supseteq
\left\{ z \in \C: \text{Re}\, z > - \gamma_m  \right\},
$$
and thus
$$
\varrho \left( (L - \lambda \Theta[V_0])\Big|_{{\h}_m^\perp} \right)
\supseteq \left\{ z \in \C: \text{Re} \, z  > |\lambda| \Gamma_m -  \gamma_m
\right\}.
$$
Since $|\lambda|\Gamma_m-\gamma_m<0$ (see \eqref{cond-lambda}) we have
$$
\left(L - \lambda \Theta[V_0]\right)^{-1} = L^{-1} \left( {\rm Id} - \lambda
\Theta[V_0] L^{-1}\right) ^{-1} \qquad (\mbox{on }\h_m^\perp).
$$
Using \eqref{sigmatilde} and $|\lambda|\Gamma_m<\sigma_m$ (see \eqref{cond-lambda}) we conclude
$$
\big\| {(L - \lambda \Theta[V_0])} \Big |_{\h_m^\perp} ^{-1}\ \big\|_{\mathscr B({{\h}}_m^\perp )} \leq \frac{1}{\sigma_m} \, \frac{ 1}{ 1 - |\lambda| \Gamma_m \frac{1}{\sigma_m} } = \frac{1}{ \sigma_m - |\lambda| \Gamma_m}.
$$
Thus, by writing
$$
w_* = \left( (L - \lambda \Theta[V_0])\Big|_{\h_m^\perp}\right)^{-1}
(\lambda \Theta[V_0] \mu),
$$
we infer
$${\| w_* \|}_{\h_m} \equiv {\| w_\infty - \mu \|}_{\h_m} \leq\frac{ |\lambda| \Gamma_m\|\mu\|_{\h_m}}{ \sigma_m - |\lambda| \Gamma_m}$$
and the assertion is proved.
\end{proof}

\begin{remark}
Due to the mass normalization $\iint w_\infty\,\D x\D \xi =\iint \mu\,\D x\D \xi =1$,
the fixed point $w_*$ must take both positive and negative values. Thus, $w_\infty=\mu+w_*$ may,
in general, also take negative values.
\end{remark}

\begin{proof}[Proof of Theorem \ref{th2}] We start with assertion (ii), which follows from the fact that
\begin{align*}
{\| \, \rho \, \|}_{\mathscr T_2} = {\| \, \rho (\cdot, \cdot) \, \|}_{L^2} =
(2\pi)^{d/2} \, {\| \, w (\cdot, \cdot) \, \|}_{L^2} \leq (2\pi)^{d/2} \, \, {\| \,
 w(\cdot, \cdot) \|}_{\h_m}, \quad \forall \, \rho \in \mathscr T_2.
\end{align*}
Thus, we infer
$$
\rho(t) \stackrel{t \rightarrow
\infty}{\longrightarrow}  \rho_\infty \quad \mbox{ in $\mathscr T_2$ },
$$
with the exponential rate obtained from Theorem 1 (ii).

To prove assertion (i) we consider the transient equation \eqref{qfp} as an auxiliary problem: Choose
any $\rho_0 \in \mathscr T_1^+$ such that $\tr \rho_0 = 1$ and the corresponding $w_0 \in \h_m$. Due to the
results on the linear Cauchy problem given in \cite{ArSp} we  know that \eqref{qfp} gives rise to
a unique mild solution $\rho \in C([0,\infty); \mathscr T^+_1)$, satisfying $\tr \rho(t)=1$, for all $t\geq 0$.
Hence, the trajectory $\{\rho(t),\,t\ge0\}$ is bounded in $\mathscr T_1$. Since $\mathscr T_1$ has a predual, i.e.
the compact operators on $L^2(\R^d)$,
the Banach-Alaoglu Theorem then asserts the existence of a
sequence $\{ t_n \}_{n \in \N} \subset \R_+$ with $t_n\to\infty$, such that
$$
\rho(t_n) \stackrel{n \rightarrow
\infty}{\longrightarrow} \widetilde \rho \quad \mbox{ in $\mathscr T_1$ weak-$\star $}\,
$$
for some limiting $\widetilde \rho \in \mathscr T_1$.
The already obtained $\mathscr T_2$-convergence of $\rho(t)$ towards $\rho_\infty \in \mathscr T_2$ implies $\rho_\infty = \widetilde \rho \in \mathscr T_1$. And the
\emph{uniqueness} of the steady state yields the convergences of the whole $t$-dependent function $\rho(t) \to \widetilde \rho$ in $\mathscr T_1$ weak-$\star $. Finally, we also conclude positivity of the operator $\rho_\infty $ by the
$\mathscr T_2$-convergence and the fact that we already know from \cite{ArSp}: $\rho(t) \geq 0$, for all $t\geq 0$.

It remains to
prove $\tr \rho_\infty = 1$. To this end, we recall that for any $\rho \in \mathscr T_1^+$ the corresponding kernel
$$
\vartheta (x, \eta) := \rho \left( x+\frac{\eta }{2}\ , \ x-\frac{\eta }{2} \right)
$$
satisfies $ \vartheta \in C(\R^d_\eta, L^1_+(\R^d_x))$, see \cite{Ar}, and it also holds
\begin{equation}\label{formula}
\tr \rho = \int_{\R^d} \vartheta (x, 0) \, \D x.
\end{equation}
Further, note that $\vartheta (x,\eta) = (\mathcal F_{\xi \to \eta}
w)(x,\eta) \equiv \hat w(x,\eta)$, by \eqref{trans}.  On the other
hand, for any $w \in \h_m$ we know that $ \hat w \in C(\R^d_\eta,
L^1(\R^d_x))$, due to the polynomial $L^2$-weight $\nu_m^{-1}$ in $x\in
\R^d$ and a simple Sobolev imbedding w.r.t.\ the variable $\eta \in
\R^d$ (for both embeddings we used $m>\frac{d}{2}$). Hence the normalization condition $\iint w_\infty \, \D x \D
\xi = 1$ implies $\tr \rho_\infty = 1$, via \eqref{formula}, and
assertion (ii) is proved.

Finally, we prove claim (iii) by first noting that the $\mathscr
T_2$-convergence of $\rho(t)$ implies convergence in the strong
operator topology. Thus, having in mind that ${ \| \rho(t) \|
}_{\mathscr T_1} = { \| \rho_0 \| }_{\mathscr T_1} = 1$, we infer from
Gr\"umm's theorem (Th.\ 2.19 in \cite{Si}) that $\rho(t)$ also converges in the
$\mathscr T_1$-norm towards $\rho_\infty$. This concludes the proof of
Theorem \ref{th2}.
\end{proof}

\appendix

\section{Proof of Proposition \ref{prop_assum_AB}}\label{S7.1}

The proof will be divided into several steps:\\

\emph{Step 1:} Let $\chi\in C_0^\infty(\R^{2d})$ be such that $\chi=1$ on
 $B_1(0)$, with $\supp(\chi)\subseteq B_2(0)$,
 $\|\nabla \chi\|_{L^\infty}\le \sqrt2$, and let $\chi_\varepsilon(y) :=
 \chi(\varepsilon \, y)$, for any $y=(x,\xi) \in \R^{2d}$, $0<\varepsilon<1$.
 We define
\begin{align}\label{L1varepsilon}
L_1^\varepsilon w :=  \big(d \, w - \nu_m
\, \nabla \nu_m^{-1} \cdot \nabla w \big) \, \chi_\varepsilon,
\end{align}
as well as
\begin{align}\label{L2varepsilon}
L_2^\varepsilon w := & \ \nu_m \, \diver \left( \nu_m^{-1} \, \nabla
 w \right) + \nabla w \cdot \nabla A + d \, w +
\diver(w \, F)  \\
& +  \left(d \, w -
\nu_m \, \nabla \nu_m^{-1} \cdot \nabla w \right)\,\left(1-\chi_\varepsilon\right).\nonumber
\end{align}
It is easily seen that $L_1^\varepsilon$ indeed satisfies property (1).\\

\emph{Step 2:} In order to prove properties (2) and (3), we need to show that
$$
( L_2^\varepsilon-z)^{-1}:  {\mathcal H}_m \to {\mathcal {H}}_m^1,
$$
is bounded for $z \in \Omega \subset \C$.
To this end, it suffices to show that $ (L_2^\varepsilon-z)$ satisfies $\forall\,z\in\Omega$:
\begin{align}\label{coercive}
- {\rm Re}\,\langle  (L_2^\varepsilon-z) w, w \rangle_{{ \mathcal H}_m} \ge  c \|  w \|_{{\mathcal H}_m^1}^2,
\end{align}
with some $c=c({\rm Re}\,z)>0$.
Indeed, suppose (\ref{coercive}) holds
for all (complex valued) $w\in {\mathcal H}_m^2:=\{w\in {\mathcal H}_m : \nabla w, (\nabla A+F)\cdot \nabla w, \Delta w \in {\mathcal H}_m\}$.
Then, $L_2^\varepsilon-z$ is densely defined on $\mathscr D(L_2^\varepsilon):={\mathcal H}_m^2\subset \mathcal H_m$ and dissipative. Hence, it has a maximal dissipative (and thus surjective on ${\mathcal H}_m$) extension, which we shall consider in the sequel.
{} From (\ref{coercive}) we conclude
\begin{align*}
{c} \| w \|^2_{{\mathcal H}_m^1}\le   - {\rm Re}\,\langle ( L_2^\varepsilon -z) w, w \rangle_{ {\mathcal H}_m} \le  \| w \|_{{ \mathcal H}_m^1} \| ( L_2^\varepsilon -z) w \|_{{\mathcal H}_m}, \quad {c}>0,
\end{align*}
which implies
\begin{align}\label{coercive3}
\| ( L_2^\varepsilon -z)^{-1} \tilde w \|_{{\mathcal H}_m^1}\le
\frac{1}{{c}} \, \| \tilde w \|_{{\mathcal H}_m},\quad
\textrm{for all} \;  \tilde w \in {\mathcal H}_m.
\end{align}
For future reference we note that this bound will turn out to be uniform on the lines $z=a+is, \,s\in\R$ (with fixed $a>-\Lambda_m$), since $c=c({\rm Re}\,z)$.\\

\emph{Step 3:} Now, we have to prove \eqref{coercive}. To this end, we decompose
\begin{align*}
 \text{Re}\, \left\langle L_2^\varepsilon w, w
 \right\rangle_{{\mathcal{H}}_m} =
 &-\iint_{\R^{2d}} \left(1+A^m\right) |\nabla w|^2 \; \D x \, \D \xi \\
 &- \frac{m}{2} \, \iint_{\R^{2d}} |w|^2 \, A^{m-1} \, \left|\nabla
   A\right|^2 \; \D x \, \D \xi \\
   & +d \, \iint_{\R^{2d}} |w|^2 \,
 \left(1+A^m\right) \,
 \left(1-\chi_\varepsilon \right) \; \D x \, \D \xi\\
 &+ \frac{1}{2} \, \iint_{\R^{2d}} |w|^2 \, \nabla
 \left(1-\chi_\varepsilon\right)\cdot \nabla \left(1+A^m\right)
 \; \D x \, \D \xi \\
 & + \frac{1}{2} \, \iint_{\R^{2d}} |w|^2 \,
 \left(1-\chi_\varepsilon\right) \, \Delta \left(1+A^m\right)
 \; \D x \, \D \xi\\
 =:& \ I_1 + I_2 + I_3 + I_4 + I_5.
\end{align*}
Using Lemma \ref{techlemma} (b), we readily obtain for $\varepsilon\le\frac1{\sqrt3}$:
\begin{align*}
{\left|I_3\right|}  & \ \le \frac{m}{4} \,
\iint_{\R^{2d}} |w|^2 \, A^{m-1} \, \left|\nabla A \right|^2 \,
\left(1-\chi_\varepsilon \right)\; \D x \, \D \xi \\
&\  \le \frac{m}{4} \, \iint_{\R^{2d}} |w|^2 \, A^{m-1} \, \left|\nabla
 A\right|^2  \; \D x \, \D \xi.
\end{align*}
In order to treat the term $I_4$, we note that
\[
| \nabla \chi_\varepsilon | =
\varepsilon | \nabla \chi |, \ |\nabla \chi| \le \sqrt2, \  \textrm{supp}\; \{
\nabla \chi_\varepsilon\} \subset \Big \{ y\in \R^{2d}\; \big| \;
\frac{1}{\varepsilon} \le |y| \le \frac{2}{\varepsilon} \Big \}.
\]
Therefore, by \eqref{gradA-bound},
\[
\frac{1}{ |\nabla A |} \le \sqrt{18}\, {\varepsilon} \quad
\text{for} \quad  y=(x,\xi) \in
\textrm{supp}\; \{ \nabla \chi_\varepsilon\}.
\]
With $\e < 1$ this allows us to estimate
\begin{align*}
 \left| \nabla \left(1-\chi_\varepsilon\right)\cdot \nabla
   \left(1+A^m\right) \right| & \le \varepsilon \, m \, A^{m-1} \,
 \left|\nabla \chi \right| \, \left|\nabla A \right| \\
  & \le  6\varepsilon^2 \, m  \, A^{m-1} \, \left| \nabla A \right|^2  \le 6{\varepsilon} \, m \, A^{m-1} \, \left| \nabla A \right|^2,
\end{align*}
which implies
$$
| I_4 | \le 3m\, \varepsilon \, \iint_{\R^{2d}} |w|^2 \, A^{m-1} \,
\left|\nabla A\right|^2 \; \D x \, \D \xi.
$$
The term $I_5$ can be estimated using Lemma \ref{techlemma} (d):
\begin{align*}
 I_5 & = \frac{1}{2} \, \iint_{\R^{2d}} |w|^2 \,
 \left(1-\chi_\varepsilon\right) \, \Delta (1+A^m)  \; \D x \, \D \xi \\
 & \le 3{m} \varepsilon^2 \, {(m-1+3d)} \,
 \iint_{\R^{2d}} |w|^2 \, \left(1-\chi_\varepsilon\right) \, A^{m-1} \,
 \left|\nabla A\right|^2  \; \D x \, \D \xi\\
 & \le 3{m} \, \varepsilon^2 \, {(m-1+3d)} \,
 \iint_{\R^{2d}} |w|^2  \, A^{m-1} \, \left|\nabla
   A\right|^2 \; \D x \, \D \xi.
\end{align*}
In summary, we obtain
\begin{align*}
 \frac{1}{4} \, I_2 + |I_4 |\le -\left( \frac{m}{8} -
   3m\varepsilon \right) \, \iint_{\R^{2d}} |w|^2 \, A^{m-1}
 \, \left|\nabla A\right|^2 \; \D x \, \D \xi,
\end{align*}
and
\begin{align*}
 \frac{3}{4} \, I_2 + I_3 +I_5 \le - \left( \frac{m}{8} -
   {3m\varepsilon^2} \, {(m-1+3d)} \right) \,
 \iint_{\R^{2d}}|w|^2 \, A^{m-1} \, \left|\nabla A\right|^2
 \; \D x \, \D \xi.
\end{align*}
Now choosing $\varepsilon \le \min\left\{\frac1{24};\frac{1}{12\sqrt{m}}\right\}$,
we can estimate (using $m\ge d$)
\begin{align*}
 m \, \left( \frac{1}{8} - 3\varepsilon \right)  \ge 0,\quad
 m \, \left( \frac{1}{8} - 3{\varepsilon^2}(m-1+3d)\right)  \ge
 \frac{m}{24}.
\end{align*}
Therefore
\begin{align*}
-\text{Re}\, \langle  L_2^\varepsilon w, w \rangle_{{\mathcal{H}}_m} \ge & \ \iint_{\R^{2d}} (1+A^m) |\nabla w|^2  \D x \, \D \xi  \\
& + \left( \frac{m}{8}-2 \right) \iint_{\R^{2d}} |w|^2A^{m-1}|\nabla A|^2  \D x \, \D \xi,
\end{align*}
which we estimate further using Lemma \ref{techlemma} (b):
\begin{align*}
-\text{Re}\, \langle  L_2^\varepsilon w, w \rangle_{{\mathcal{H}}_m} \ge & \iint_{\R^{2d}} (1+A^m) |\nabla w|^2 \; \D x \, \D \xi  \\
& +  \frac{d}{6} \iint_{\R^{2d}} |w|^2(1+A^{m}) (1-\chi_{1/\sqrt{12}})\; \D x \, \D \xi.
\end{align*}
This establishes the desired estimate outside of $(x,\xi)\in B_{1/\sqrt{12}}(0)$.
In order to take into account the contribution near $|y|=0$, we consider
$$
\iint_{\R^{2d}} (1+A^m) \, \left|\nabla w\right|^2 \; \D x \, \D \xi .
$$
Applying Sobolev's inequality we obtain for any $d> 1$:
\begin{align*}
 \iint_{\R^{2d}} \left(1+A^m\right) \, \left|\nabla w\right|^2 \; \D
 x \, \D \xi & \ge \iint_{\R^{2d}} \left|\nabla w\right|^2 \; \D x
 \, \D \xi  \ge
  C_d^2 \, \|w \|^2_{L^{q}(\R^{2d})} ,
\end{align*}
with $ q=\frac{4d}{2d-2}$. This can be estimated further via
\begin{align*}
C_d^2 \, \|w \|^2_{L^{q}(\R^{2d})}  \ge    C_d^2 \, \|w \|^2_{L^{q}(B_{1/\sqrt{12}}(0))} \ge
 \frac{ C_1^2}{ \|  1 + A^m \| _{L^\infty( B_{1/\sqrt{12}}(0))}}  \|w
 \|^2_{\mathcal{H}_{m}(B_{1/\sqrt{12}}(0))},
\end{align*}
where $C_1$ depends only on the Sobolev constant $C_d$ and on the
measure of the ball $B_{1/\sqrt{12}}(0)$.
Finally, in order to deal with $d=1$ we apply Cauchy-Schwarz to obtain
\begin{align*}
 \iint_{\R^{2}} | \nabla w| \; \D x \, \D \xi &= \iint_{\R^{2}} |
 \nabla w| \,
 \left(1+A^m\right)^{1/2} \, \left(1+A^m\right)^{-1/2} \D x \, \D \xi \\
 &\le \left( \iint_{\R^{2}} \, \left|\nabla w\right|^2 \,
   \left(1+A^m\right)  \D x  \D \xi \right)^{1/2} \,
\left( \iint_{\R^{2}} \left(1+A^m\right)^{-1}   \D x  \D \xi \right)^{1/2}.
 \end{align*}
By assumption we have $m> 1$ (cf.\ Remark \ref{Rem4.4}).
Hence, the second factor on the r.h.s.\ is a finite constant, denoted by $C_{A,m}$.
Applying again Sobolev's inequality (for $d=1$), we obtain
\begin{align*}
 \| \nabla w \|_{L^1( \R^{2})} \ge \widetilde C_1 \, \| w \|_{L^2( \R^{2})} \ge \frac{ \widetilde C_1}{\| 1 + A^m \|^{1/2} _{L^\infty( B_{1/\sqrt{12}}(0))}}
 \, \| w \|_{\mathcal{H}_m(B_{1/\sqrt{12}}(0))} .
\end{align*}

By combining all the above estimates we infer for all (complex valued) $w\in {\mathcal H}_m^2$:
\begin{align*}
 -\text{Re}\,  \left\langle L_2^\varepsilon w, w \right\rangle_{{\mathcal{H}_m}}
  \ge \Lambda_m \, \left\langle w, w \right\rangle_{{\mathcal{H}^1_m}} ,
\end{align*}
where $\Lambda_m >0$ is given by
\begin{equation}\label{Lambda_m}
 \Lambda_m := \textrm{min} \Big \{ \frac{C_1^2}{{2 \| 1 + A^m \|_{L^\infty( B_{1/\sqrt{12}}(0))}} }\, ;\, \frac{\widetilde C_1^2}{ 2C_{A,m}^2 \| 1 + A^m \|_{L^\infty( B_{1/\sqrt{12}}(0))}} \, ; \,
   \frac{1}{2}\, ;\, \frac{d}{6} \Big \}.
\end{equation}
In summary, inequality (\ref{coercive}) holds on ${{\mathcal H}}_m^2$ for any ${\rm Re} \, z > - \Lambda_m$.
And we easily see that $c(z)=\min\{\Lambda_m\,;\,\Lambda_m+{\rm Re}\,z\}$.
Hence, the operator $(L_2^\varepsilon -z)$ is invertible in ${{\mathcal H}}_m^1$. And we have to choose $\varepsilon = \varepsilon(m) >0$ such that
\begin{align}\label{eps}
 \varepsilon \le \textrm{min}\left\{ \frac{1}{12\sqrt{m}}\, ; \, \frac{1}{24} \right\}.
\end{align}
\hfill $\Box$

\bibliographystyle{amsplain}

\end{document}